\documentclass[a4paper,UKenglish,cleveref,autoref,thm-restate]{socg-lipics-v2021}

\usepackage{tikz,graphicx}
\usepackage{subcaption}
\usetikzlibrary{positioning,arrows.meta}

\usepackage{lineno}

\linenumbers

\title{On Threshold Pairwise Compatibility Graphs}

\author{Sheikh Azizul Hakim}
{Department of Computer Science and Engineering, Bangladesh University of Engineering and Technology, Dhaka, Bangladesh}
{hakim@cse.buet.ac.bd}
{https://orcid.org/0000-0002-3405-2402}
{}

\author{Md. Shamsuzzoha Bayzid}
{Department of Computer Science and Engineering, Bangladesh University of Engineering and Technology, Dhaka, Bangladesh}
{shams_bayzid@cse.buet.ac.bd}
{https://orcid.org/0000-0002-5640-0615}
{}

\authorrunning{S.~A.~Hakim and M.~S.~Bayzid}
\Copyright{Sheikh Azizul Hakim and Md. Shamsuzzoha Bayzid}

\ccsdesc[100]{Mathematics of computing~Graph theory}
\ccsdesc[100]{Theory of computation~Design and analysis of algorithms}

\keywords{Pairwise Compatibility Graphs, threshold graph representations, multi-interval PCGs, graph classes, tree-metric representations}

\begin{document}



\maketitle

\begin{abstract}
\textsc{Pairwise Compatibility Graphs} (\textsc{PCGs}) form a tree-metric graph class that originated in phylogeny and has since attracted sustained interest in graph theory. Several natural generalizations have been proposed in order to overcome the expressive limitations of classical \textsc{PCGs}, including \textsc{$k$-interval-\textsc{PCG}s}, \textsc{$k$-\textsc{OR}-\textsc{PCG}s}, and \textsc{$k$-\textsc{AND}-\textsc{PCG}s}. In this paper, we introduce \textsc{$(k,t)$-threshold-\textsc{PCG}s}, a threshold-based framework that unifies these generalized notions: adjacency is determined by whether at least $t$ among $k$ underlying \textsc{PCG} predicates accept the vertex pair. We investigate the expressive power of this model from both constructive and asymptotic viewpoints. On the positive side, we show that every graph on $n$ vertices is a \textsc{$(n,t)$-threshold-\textsc{PCG}} for every $1 \le t \le n$. On the negative side, we prove that for every fixed pair $(k,t)$, the class of \textsc{$(k,t)$-threshold-\textsc{PCG}s} is asymptotically rare among all graphs. As a consequence, we obtain sharp separations from previously studied models, including a strict expressive gap relative to \textsc{$k$-interval-\textsc{PCG}s}. We also study explicit obstruction families through incidence graphs and derive additional structural consequences for the conjunction case, including the strictness of the \textsc{$k$-\textsc{AND}-\textsc{PCG}} hierarchy and the failure of closure under complement.
\end{abstract}

\section{Introduction}

\textsc{Pairwise Compatibility Graphs} (\textsc{PCGs}) were introduced by Kearney, Munro, and Phillips in the context of phylogeny, where compatibility relations among taxa are modeled through distances in an underlying edge-weighted tree \cite{Kearney2003EfficientGOU}. Over time, however, the notion has developed into a graph-theoretic representation model of independent interest. Informally, a graph is a \textsc{PCG} if its vertices can be associated with the leaves of an edge-weighted tree so that two vertices are adjacent exactly when their leaf-to-leaf distance lies in a prescribed interval. This definition is easy to state and geometrically appealing, but the class it defines has proved to be surprisingly subtle.

A large body of work has been devoted to understanding the expressive power of \textsc{PCGs}. On the positive side, several nontrivial graph classes are known to admit \textsc{PCG} representations; for example, graphs of Dilworth number at most two are \textsc{PCGs} \cite{Calamoneri2013GraphsWDF,Calamoneri2014OnPCE}, triangle-free outerplanar $3$-graphs are \textsc{PCGs} \cite{Salma2012TriangleFreeO3AQ},  and further sufficient conditions are known for broader families \cite{Calamoneri2013OnTPAE,Hossain2016ANCAI}. On the negative side, the class is not universal: it was shown that not every graph is a \textsc{PCG} \cite{Yanhaona2010DiscoveringPCV}, and subsequent work developed explicit
obstruction families, proof techniques for non-\textsc{PCG}ness, and enumerative studies
of minimal non-\textsc{PCG} graphs
\cite{Durocher2013OnGTZ,Baiocchi2017SomeCOO,Baiocchi2018GraphsTAAO,Azam2020OnTEAJ}.
Surveys such as \cite{Rahman2020ASOA,Calamoneri2016PairwiseCGM,survey} document both the
breadth of the area and the persistence of several basic open problems. Even at small orders, the boundary is delicate: all graphs on at most seven vertices are \textsc{PCGs} \cite{Calamoneri2012AllGWN}, while more intricate behavior appears immediately beyond that range.

These limitations have motivated a number of natural generalizations. One direction relaxes the single-interval requirement while retaining a common witness tree, leading to \textsc{$k$-interval-\textsc{PCG}} \cite{Ahmed2017MultiintervalPCAP,Hayat2024AMTQ}. Another direction combines several \textsc{PCG} representations on the same vertex set by Boolean operations, giving rise to classes such as \textsc{$k$-\textsc{OR}-\textsc{PCG}} and \textsc{$k$-\textsc{AND}-\textsc{PCG}} \cite{Calamoneri2021OnGOI,survey}. These models enlarge the expressive scope of classical \textsc{PCGs}, but they have largely been studied in parallel rather than within a single framework. As a consequence, some basic comparative questions have remained poorly understood. For example, the recent literature asks for sharper separations between multi-interval and Boolean-combination models, for explicit obstruction families in low-level \textsc{OR}/\textsc{AND} classes, and for a better understanding of distinguished hard families from the multi-interval literature \cite{Calamoneri2021OnGOI,survey}.

In this paper, we unify these directions through a threshold-based framework. Given
$k$ underlying \textsc{PCG} predicates, we declare a pair of vertices adjacent whenever
at least $t$ of them accept the pair. This yields the class of
\textsc{$(k,t)$-threshold-\textsc{PCG}}s. The definition subsumes several previously
studied models as special cases: ordinary \textsc{PCGs} correspond to $(1,1)$,
\textsc{$k$-\textsc{OR}-\textsc{PCG}} is recovered when $t=1$, and
\textsc{$k$-\textsc{AND}-\textsc{PCG}} is recovered when $t=k$. In this sense,
\textsc{$(k,t)$-threshold-\textsc{PCG}}s provide a single framework for studying
fixed-size combinations of tree-distance graph representations.

This framework leads to a clear dichotomy. If the number of constituent predicates is
allowed to grow with the graph size, then the model becomes universal: every graph on
$n$ vertices is a \textsc{$(n,t)$-threshold-\textsc{PCG}} for every $1\le t\le n$. In contrast, if the number of predicates is fixed, then even after allowing all thresholds, the resulting classes occupy only an asymptotically vanishing fraction of all graphs. Thus the threshold model is both expressive and structurally restrictive. Moreover, it provides a convenient setting in which several phenomena can be studied uniformly: counting arguments yield separations from multi-interval representations, incidence graphs give explicit obstruction families, and the conjunction case exhibits both a strict hierarchy and failure of complement closure.

\paragraph*{Our contributions}
This paper initiates the systematic study of
\textsc{$(k,t)$-threshold-\textsc{PCG}}s. Our main contributions are as follows.
\begin{itemize}
    \item We prove a universality theorem: every graph on $n$ vertices is a
    \textsc{$(n,t)$-threshold-\textsc{PCG}} for every $1\le t\le n$.

    \item We prove an asymptotic sparsity theorem: for every fixed integer $k$, even
    after allowing all thresholds $t\in[k]$, the union of the corresponding threshold
    classes has asymptotic density zero among all graphs.

    \item We compare threshold combinations with multi-interval representations and show
    that \textsc{$(k,t)$-threshold-\textsc{PCG}}s can be substantially more expressive.
    In particular, if \(f(k)\in o(k^2/\log k)\), then for all sufficiently large \(k\)
    there exists a graph that is a \textsc{$(k,t)$-threshold-\textsc{PCG}} but not an
    \textsc{$f(k)$-interval-\textsc{PCG}}. As a consequence, for all sufficiently large
    \(k\),
    \[
    k\textsc{-interval-\textsc{PCG}} \subsetneq k\textsc{-OR-\textsc{PCG}}.
    \]

    \item We develop explicit obstruction families via set-system incidence graphs. This
    yields concrete lower bounds for threshold representations, explicit infinite
    families outside low-level threshold classes, and quantitative bounds for some graph classes.

    \item In the conjunction setting, we prove that the hierarchy of
    \textsc{$k$-\textsc{AND}-\textsc{PCG}} classes is strict:
    \[
    k\textsc{-AND-\textsc{PCG}} \subsetneq (k+1)\textsc{-AND-\textsc{PCG}}
    \qquad\text{for every }k\ge 1.
    \]

    \item We also show that the classes \(k\)-\textsc{AND}-\textsc{PCG} and k\textsc{-interval-\textsc{PCG}} are not closed under complement.
    
\end{itemize}

Taken together, these results show that threshold combinations provide a robust and
meaningful extension of the \textsc{PCG} paradigm. They are expressive enough to unify
and strengthen several known variants, yet rigid enough to admit strong counting
theorems, explicit obstructions, and nontrivial hierarchy results. We believe this makes
\textsc{$(k,t)$-threshold-\textsc{PCG}}s a natural object of study in their own right and
a useful organizing framework for future work on tree-distance graph representations.

\paragraph*{Organization}
Section~\ref{sec:preliminaries} introduces the relevant definitions and basic
relationships among the classes under consideration.
Section~\ref{sec:universality} proves universality when the number of predicates grows
with the graph size, and Section~\ref{sec:counting} proves asymptotic sparsity for fixed
\(k\).
Section~\ref{sec:consequences} compares the threshold model with multi-interval
representations.
Section~\ref{sec:incidence} develops explicit obstruction families via set-system
incidence graphs.
Section~\ref{sec:and} studies the hierarchy and complement behavior of
\textsc{$k$-\textsc{AND}-\textsc{PCG}}s.  For brevity, we defer the proof that
\textsc{$k$-interval-\textsc{PCG}} is not closed under complement to Appendix~\ref{app:kint}. We conclude in Section~\ref{sec:conclusion} with some open problems. 

\section{Preliminaries and Definitions}\label{sec:preliminaries}

We begin by recalling the standard definition of \textsc{Pairwise Compatibility Graphs} and the main generalizations relevant to this paper. We then introduce \textsc{$(k,t)$-threshold-\textsc{PCG}s} formally and record the basic identities that place \textsc{PCG}, \textsc{$k$-\textsc{OR}-\textsc{PCG}}, and \textsc{$k$-\textsc{AND}-\textsc{PCG}} inside a single threshold framework.

Throughout the paper, all graphs are finite, simple, and undirected. For a positive integer $k$, we write
\(
[k]:=\{1,2,\dots,k\}.
\)
If $T$ is an edge-weighted tree and $x,y$ are leaves of $T$, then $d_T(x,y)$ denotes the sum of the edge weights along the unique $x$--$y$ path in $T$. We also denote the leafset of $T$ as $L(T)$. Furthermore, we call a tree $T$ \textit{reduced} if it has no vertices of degree 2.

We first recall the classical notion of a \textsc{Pairwise Compatibility Graph}, introduced in \cite{Kearney2003EfficientGOU}; see also \cite{Rahman2020ASOA,Calamoneri2016PairwiseCGM,survey}.

\begin{definition}[\textsc{Pairwise Compatibility Graph \cite{Kearney2003EfficientGOU}}]
Let $G=(V,E)$ be a graph. We say that $G$ is a \textsc{PCG} if there exist an edge-weighted tree $T$, two nonnegative real numbers $d_{\min},d_{\max}$ with $d_{\min}\le d_{\max}$, and a bijection
\(
\zeta:V\to L(T),
\)
where $L(T)$ denotes the leaf set of $T$, such that for every two distinct vertices $u,v\in V$,
\[
uv\in E
\quad\Longleftrightarrow\quad
d_{\min}\le d_T(\zeta(u),\zeta(v))\le d_{\max}.
\]
In this case, we say that $(T,d_{\min},d_{\max},\zeta)$ is a \textsc{PCG} representation of $G$.

When the bijection $\zeta$ is clear from the context, we suppress it from the notation and simply write
\(
G=\textsc{PCG}(T,d_{\min},d_{\max}).
\) or \(
G=\textsc{PCG}(T,[d_{\min},d_{\max}]).
\)
\end{definition}

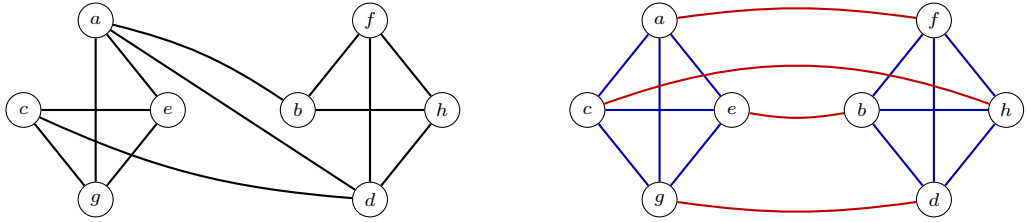
\begin{figure}[t]
\centering

\begin{minipage}{\textwidth}
\centering
\begin{tikzpicture}[
    x=0.95cm,y=0.95cm,
    every node/.style={font=\small},
    int/.style={circle,draw,fill=white,minimum size=4.4mm,inner sep=0pt},
    leaf/.style={circle,draw,fill=white,minimum size=4.4mm,inner sep=0pt},
    wlbl/.style={font=\scriptsize,fill=white,inner sep=1pt}
]

\node at (0,1.0) {\textbf{(a)} A weighted tree \(T\) on eight leaves};

\node[int] (r) at (0,0) {};
\node[int] (L) at (-2.2,-1.4) {};
\node[int] (R) at ( 2.2,-1.4) {};

\draw (r) -- node[wlbl,above left] {$10$} (L);
\draw (r) -- node[wlbl,above right] {$10$} (R);

\node[leaf] (a) at (-4.1,-2.8) {$a$};
\node[leaf] (c) at (-2.9,-3.5) {$c$};
\node[leaf] (e) at (-1.5,-3.5) {$e$};
\node[leaf] (g) at (-0.3,-2.8) {$g$};

\draw (L) -- node[wlbl,left] {$1$} (a);
\draw (L) -- node[wlbl,left] {$2$} (c);
\draw (L) -- node[wlbl,right] {$3$} (e);
\draw (L) -- node[wlbl,right] {$4$} (g);

\node[leaf] (f) at ( 0.3,-2.8) {$f$};
\node[leaf] (h) at ( 1.5,-3.5) {$h$};
\node[leaf] (b) at ( 2.9,-3.5) {$b$};
\node[leaf] (d) at ( 4.1,-2.8) {$d$};

\draw (R) -- node[wlbl,left] {$4$} (f);
\draw (R) -- node[wlbl,left] {$3$} (h);
\draw (R) -- node[wlbl,right] {$2$} (b);
\draw (R) -- node[wlbl,right] {$1$} (d);

\end{tikzpicture}
\end{minipage}

\vspace{1em}

\begin{minipage}{0.47\textwidth}
\centering
\begin{tikzpicture}[
    x=0.95cm,y=0.95cm,
    v/.style={circle,draw,fill=white,minimum size=4.6mm,inner sep=0pt,font=\scriptsize},
    edgepcg/.style={draw=black,thick}
]

\node at (0,2.55) {\textbf{(b)} \(\mathrm{PCG}(T,[4,23])\)};

\node[v] (a) at (-1.9, 1.4) {$a$};
\node[v] (c) at (-2.9, 0.15) {$c$};
\node[v] (e) at (-0.9, 0.15) {$e$};
\node[v] (g) at (-1.9,-1.1) {$g$};

\node[v] (f) at ( 1.9, 1.4) {$f$};
\node[v] (h) at ( 2.9, 0.15) {$h$};
\node[v] (b) at ( 0.9, 0.15) {$b$};
\node[v] (d) at ( 1.9,-1.1) {$d$};

\draw[edgepcg] (a) -- (e);
\draw[edgepcg] (a) -- (g);
\draw[edgepcg] (c) -- (e);
\draw[edgepcg] (c) -- (g);
\draw[edgepcg] (e) -- (g);

\draw[edgepcg] (f) -- (h);
\draw[edgepcg] (f) -- (b);
\draw[edgepcg] (f) -- (d);
\draw[edgepcg] (h) -- (b);
\draw[edgepcg] (h) -- (d);

\draw[edgepcg] (a) to[bend right=0] (d);
\draw[edgepcg] (a) to[bend left=10] (b);
\draw[edgepcg] (c) to[bend right=10] (d);

\end{tikzpicture}
\end{minipage}
\hfill
\begin{minipage}{0.47\textwidth}
\centering
\begin{tikzpicture}[
    x=0.95cm,y=0.95cm,
    v/.style={circle,draw,fill=white,minimum size=4.6mm,inner sep=0pt,font=\scriptsize},
    edgeone/.style={draw=blue!70!black,thick},
    edgetwo/.style={draw=red!75!black,thick}
]

\node at (0,2.55) {\textbf{(c)} \(\mathrm{2\text{-}interval\text{-}PCG}(T,{\color{blue}[3,7]}\cup{\color{red}[25,25]})\)};

\node[v] (a) at (-1.9, 1.4) {$a$};
\node[v] (c) at (-2.9, 0.15) {$c$};
\node[v] (e) at (-0.9, 0.15) {$e$};
\node[v] (g) at (-1.9,-1.1) {$g$};

\node[v] (f) at ( 1.9, 1.4) {$f$};
\node[v] (h) at ( 2.9, 0.15) {$h$};
\node[v] (b) at ( 0.9, 0.15) {$b$};
\node[v] (d) at ( 1.9,-1.1) {$d$};

\draw[edgeone] (a)--(c);
\draw[edgeone] (a)--(e);
\draw[edgeone] (a)--(g);
\draw[edgeone] (c)--(e);
\draw[edgeone] (c)--(g);
\draw[edgeone] (e)--(g);

\draw[edgeone] (b)--(d);
\draw[edgeone] (b)--(f);
\draw[edgeone] (b)--(h);
\draw[edgeone] (d)--(f);
\draw[edgeone] (d)--(h);
\draw[edgeone] (f)--(h);

\draw[edgetwo] (a) to[bend left=8] (f);
\draw[edgetwo] (c) to[bend left=20] (h);
\draw[edgetwo] (e) to[bend right=10] (b);
\draw[edgetwo] (g) to[bend right=8] (d);



\end{tikzpicture}
\end{minipage}

\caption{Panel (a) shows an edge-weighted tree. Panel (b) shows a graph that is a PCG for the given tree and the interval [4, 23]. Panel (c) shows a graph that is a \textsc{$2$-Interval-\textsc{PCG}} for the given tree and intervals [3, 7] and [25, 25]. The graph in Panel (c) is one of the smallest known graphs that is not a PCG. \cite{ Azam2020AMFAN, Calamoneri2022AllGWAA}}
\label{fig:pcg-2interval-overview}
\end{figure}

A natural extension is to allow several admissible distance intervals while keeping a single witness tree. This yields the notion of \textsc{$k$-interval-\textsc{PCG}} \cite{Ahmed2017MultiintervalPCAP}; see also \cite{Hayat2024AMTQ,Papan2023On2PA,Hakim2022NewROR}.

\begin{definition}[\textsc{$k$-Interval-\textsc{PCG} \cite{Ahmed2017MultiintervalPCAP}}]
Let $k$ be a positive integer. A graph $G=(V,E)$ is a \textsc{$k$-interval-\textsc{PCG}} if there exist an edge-weighted tree $T$, intervals
\(
I_1,\dots,I_k
\)
of nonnegative real numbers, and a bijection
\(
\zeta:V\to L(T)
\)
such that, for every two distinct vertices $u,v\in V$,
\[
uv\in E
\quad\Longleftrightarrow\quad
d_T(\zeta(u),\zeta(v))\in \bigcup_{i=1}^k I_i.
\]
\end{definition}

By merging overlapping intervals if necessary, one may always assume that $I_1,\dots,I_k$ are pairwise disjoint.

A different line of generalization combines several \textsc{PCG} representations on the same vertex set by Boolean operations. In particular, union and intersection variants were studied in \cite{Calamoneri2021OnGOI,survey}.

\begin{definition}[\textsc{$k$-\textsc{OR}-\textsc{PCG}} \cite{Calamoneri2021OnGOI}]
Let $k$ be a positive integer. A graph $G=(V,E)$ is a \textsc{$k$-\textsc{OR}-\textsc{PCG}} if there exist \textsc{PCGs}
\[
G_1=(V,E_1),G_2=(V,E_2),\ \dots\ ,G_k=(V,E_k)
\]
on the same vertex set $V$ such that
\[
E=E_1\cup E_2\cup\cdots\cup E_k.
\]
\end{definition}

\begin{definition}[\textsc{$k$-\textsc{AND}-\textsc{PCG}} \cite{Calamoneri2021OnGOI}]
Let $k$ be a positive integer. A graph $G=(V,E)$ is a \textsc{$k$-\textsc{AND}-\textsc{PCG}} if there exist \textsc{PCGs}
\[
G_1=(V,E_1),G_2=(V,E_2),\ \dots\ ,G_k=(V,E_k)
\]
on the same vertex set $V$ such that
\[
E=E_1\cap E_2\cap\cdots\cap E_k.
\]
\end{definition}
\begin{figure}[t]
\centering
\resizebox{\textwidth}{!}{%
\begin{tikzpicture}[
    line cap=round,
    line join=round,
    panel/.style={rounded corners=4pt, draw=black!40},
    leaf/.style={circle, draw, fill=white, minimum size=4.6mm, inner sep=0pt, font=\scriptsize},
    inode/.style={circle, fill=black, minimum size=1.8mm, inner sep=0pt},
    edge/.style={draw=black, semithick},
    common/.style={draw=black, semithick},
    h1only/.style={draw=blue!70!black, thick},
    h2only/.style={draw=red!75!black, thick, dashed},
    wlbl/.style={font=\small, fill=white, inner sep=0.5pt},
    ptitle/.style={font=\small\bfseries},
    pnote/.style={font=\scriptsize}
]


\node[ptitle] at (4.0, 14.5) {(a) $H_1$};
\begin{scope}[shift={(4.0, 12.2)}]
\node[leaf] (a) at ( 1.85, 0.00) {$a$};
\node[leaf] (b) at ( 1.30, 1.30) {$b$};
\node[leaf] (c) at ( 0.00, 1.85) {$c$};
\node[leaf] (d) at (-1.30, 1.30) {$d$};
\node[leaf] (e) at (-1.85, 0.00) {$e$};
\node[leaf] (f) at (-1.30,-1.30) {$f$};
\node[leaf] (g) at ( 0.00,-1.85) {$g$};
\node[leaf] (h) at ( 1.30,-1.30) {$h$};

\draw[edge] (a)--(c); \draw[edge] (a)--(e);
\draw[edge] (a)--(f); \draw[edge] (a)--(g);
\draw[edge] (b)--(d); \draw[edge] (b)--(e);
\draw[edge] (b)--(f); \draw[edge] (b)--(h);
\draw[edge] (c)--(e); \draw[edge] (c)--(g);
\draw[edge] (c)--(h); \draw[edge] (d)--(f);
\draw[edge] (d)--(g); \draw[edge] (d)--(h);
\draw[edge] (e)--(g); \draw[edge] (f)--(h);

\draw[edge] (a) to[bend right=12] (h);
\draw[edge] (b) to[bend left=12] (g);
\end{scope}

\node[ptitle] at (12.0, 14.5) {(b) Witness tree for $H_1$ with interval $[12,24]$};
\begin{scope}[shift={(12.0, 12.2)}]
\node[inode] (s5) at (-2.5, 0.0) {};
\node[inode] (s4) at (-1.5, 0.0) {};
\node[inode] (s3) at (-0.5, 0.0) {};
\node[inode] (s2) at ( 0.5, 0.0) {};
\node[inode] (s1) at ( 1.5, 0.0) {};
\node[inode] (r)  at ( 2.5, 0.0) {};

\draw (s5) -- node[wlbl,above] {$1$} (s4);
\draw (s4) -- node[wlbl,above] {$3$} (s3);
\draw (s3) -- node[wlbl,above] {$6$} (s2);
\draw (s2) -- node[wlbl,above] {$1$} (s1);
\draw (s1) -- node[wlbl,above] {$1$} (r);

\node[leaf] (g) at (-2.5, 0.8) {$g$};
\node[leaf] (d) at (-2.5,-0.8) {$d$};
\node[leaf] (f) at (-1.5,-0.8) {$f$};
\node[leaf] (h) at (-0.5, 0.8) {$h$};
\node[leaf] (c) at ( 0.5,-0.8) {$c$};
\node[leaf] (a) at ( 1.5, 0.8) {$a$};
\node[leaf] (b) at ( 2.5, 0.8) {$b$};
\node[leaf] (e) at ( 2.5,-0.8) {$e$};

\draw (s5) -- node[wlbl,left]  {$0$}  (g);
\draw (s5) -- node[wlbl,left]  {$12$} (d);
\draw (s4) -- node[wlbl,left]  {$9$}  (f);
\draw (s3) -- node[wlbl,left]  {$6$}  (h);
\draw (s2) -- node[wlbl,left]  {$8$}  (c);
\draw (s1) -- node[wlbl,left]  {$5$}  (a);
\draw (r)  -- node[wlbl,right] {$0$}  (b);
\draw (r)  -- node[wlbl,right] {$12$} (e);
\end{scope}


\node[ptitle] at (4.0, 9.5) {(c) $H_2$};
\begin{scope}[shift={(4.0, 7.2)}]
\node[leaf] (a) at ( 1.85, 0.00) {$a$};
\node[leaf] (b) at ( 1.30, 1.30) {$b$};
\node[leaf] (c) at ( 0.00, 1.85) {$c$};
\node[leaf] (d) at (-1.30, 1.30) {$d$};
\node[leaf] (e) at (-1.85, 0.00) {$e$};
\node[leaf] (f) at (-1.30,-1.30) {$f$};
\node[leaf] (g) at ( 0.00,-1.85) {$g$};
\node[leaf] (h) at ( 1.30,-1.30) {$h$};

\draw[edge] (a)--(c); \draw[edge] (a)--(e);
\draw[edge] (a)--(f); \draw[edge] (a)--(g);
\draw[edge] (b)--(d); \draw[edge] (b)--(e);
\draw[edge] (b)--(f); \draw[edge] (b)--(h);
\draw[edge] (c)--(e); \draw[edge] (c)--(g);
\draw[edge] (c)--(h); \draw[edge] (d)--(f);
\draw[edge] (d)--(g); \draw[edge] (d)--(h);
\draw[edge] (e)--(g); \draw[edge] (f)--(h);

\draw[edge] (c) to[bend left=12] (f);
\draw[edge] (d) to[bend right=12] (e);
\end{scope}

\node[ptitle] at (12.0, 9.5) {(d) Witness tree for $H_2$ with interval $[10,20]$};
\begin{scope}[shift={(12.0, 6.6)}]
\node[inode] (s3) at (-1.8, 0.0) {};
\node[inode] (s2) at (-0.6, 0.0) {};
\node[inode] (s1) at ( 0.6, 0.0) {};
\node[inode] (r)  at ( 1.8, 0.0) {};
\node[inode] (t1) at ( 0.6, 1.0) {}; 

\draw (s3) -- node[wlbl,above] {$1$} (s2);
\draw (s2) -- node[wlbl,above] {$4$} (s1);
\draw (s1) -- node[wlbl,above] {$4$} (r);
\draw (s1) -- node[wlbl,left]  {$3$} (t1);

\node[leaf] (d) at (-2.6, 0.6) {$d$};
\node[leaf] (c) at (-2.6,-0.6) {$c$};
\node[leaf] (a) at (-0.6,-0.9) {$a$};
\node[leaf] (g) at (-0.2, 1.8) {$g$};
\node[leaf] (f) at ( 1.4, 1.8) {$f$};
\node[leaf] (b) at ( 2.6, 0.8) {$b$};
\node[leaf] (e) at ( 2.8, 0.0) {$e$};
\node[leaf] (h) at ( 2.6,-0.8) {$h$};

\draw (s3) -- node[wlbl,above right]{$1$} (d);
\draw (s3) -- node[wlbl,below right]{$3$} (c);
\draw (s2) -- node[wlbl,left]       {$6$} (a);
\draw (t1) -- node[wlbl,above left] {$7$} (g);
\draw (t1) -- node[wlbl,above right]{$1$} (f);
\draw (r)  -- node[wlbl,above left] {$10$}(b);
\draw (r)  -- node[wlbl,above]      {$0$} (e);
\draw (r)  -- node[wlbl,below left] {$8$} (h);
\end{scope}


\node[ptitle] at (4.0, 4.5) {(e) $A=H_1\cup H_2$ is a \textsc{2-OR-PCG}};
\begin{scope}[shift={(4.0, 2.2)}]
\node[leaf] (a) at ( 1.85, 0.00) {$a$};
\node[leaf] (b) at ( 1.30, 1.30) {$b$};
\node[leaf] (c) at ( 0.00, 1.85) {$c$};
\node[leaf] (d) at (-1.30, 1.30) {$d$};
\node[leaf] (e) at (-1.85, 0.00) {$e$};
\node[leaf] (f) at (-1.30,-1.30) {$f$};
\node[leaf] (g) at ( 0.00,-1.85) {$g$};
\node[leaf] (h) at ( 1.30,-1.30) {$h$};

\draw[common] (a)--(c); \draw[common] (a)--(e);
\draw[common] (a)--(f); \draw[common] (a)--(g);
\draw[common] (b)--(d); \draw[common] (b)--(e);
\draw[common] (b)--(f); \draw[common] (b)--(h);
\draw[common] (c)--(e); \draw[common] (c)--(g);
\draw[common] (c)--(h); \draw[common] (d)--(f);
\draw[common] (d)--(g); \draw[common] (d)--(h);
\draw[common] (e)--(g); \draw[common] (f)--(h);

\draw[h1only] (a) to[bend right=12] (h);
\draw[h1only] (b) to[bend left=12] (g);

\draw[h2only] (c) to[bend left=12] (f);
\draw[h2only] (d) to[bend right=12] (e);
\end{scope}

\node[pnote] at (4.0, -0.2) 
{black: in both $H_1,H_2$;\quad blue: only in $H_1$;\quad red dashed: only in $H_2$};

\node[ptitle] at (12.0, 4.5) {(f) $B=H_1\cap H_2$ is a \textsc{2-AND-PCG}};
\begin{scope}[shift={(12.0, 2.2)}]
\node[leaf] (a) at ( 1.85, 0.00) {$a$};
\node[leaf] (b) at ( 1.30, 1.30) {$b$};
\node[leaf] (c) at ( 0.00, 1.85) {$c$};
\node[leaf] (d) at (-1.30, 1.30) {$d$};
\node[leaf] (e) at (-1.85, 0.00) {$e$};
\node[leaf] (f) at (-1.30,-1.30) {$f$};
\node[leaf] (g) at ( 0.00,-1.85) {$g$};
\node[leaf] (h) at ( 1.30,-1.30) {$h$};

\draw[common] (a)--(c); \draw[common] (a)--(e);
\draw[common] (a)--(f); \draw[common] (a)--(g);
\draw[common] (b)--(d); \draw[common] (b)--(e);
\draw[common] (b)--(f); \draw[common] (b)--(h);
\draw[common] (c)--(e); \draw[common] (c)--(g);
\draw[common] (c)--(h); \draw[common] (d)--(f);
\draw[common] (d)--(g); \draw[common] (d)--(h);
\draw[common] (e)--(g); \draw[common] (f)--(h);
\end{scope}

\end{tikzpicture}%
}
\caption{Two \textsc{PCG} witnesses $H_1$ and $H_2$ on a shared vertex set, together with their union and intersection. Consequently, $A=H_1\cup H_2$ is a \textsc{2-OR-PCG} or equivalently, a \textsc{$(2,1)$-Threshold-\textsc{PCG}}, while $B=H_1\cap H_2$ is a \textsc{2-AND-PCG}, or equivalently, a \textsc{$(2,2)$-Threshold-\textsc{PCG}}. Both $A$ and $B$ are known not to be PCGs \cite{Azam2020AMFAN,Calamoneri2022AllGWAA}.}
\label{fig:prelim-pcg-or-and}
\end{figure}

We now define a new graph class, \textsc{$(k,t)$-Threshold-\textsc{PCG}}, as a generalization for the \textsc{$k$-\textsc{OR}-\textsc{PCGs}} and \textsc{$k$-\textsc{AND}-\textsc{PCGs}}.

\begin{definition}[\textsc{$(k,t)$-Threshold-\textsc{PCG}}]
Let $k$ and $t$ be integers with $1\le t\le k$. A graph $G=(V,E)$ is a \textsc{$(k,t)$-threshold-\textsc{PCG}} if there exist \textsc{PCGs}
\[
G_1=(V,E_1),G_2=(V,E_2),\ \dots\ ,G_k=(V,E_k)
\]
on the same vertex set $V$ such that, for every two distinct vertices $u,v\in V$,
\[
uv\in E
\quad\Longleftrightarrow\quad
\bigl|\{\,i\in[k]: uv\in E_i\,\}\bigr|\ge t.
\]
\end{definition}

Thus, a pair of vertices is adjacent precisely when it is accepted by at least $t$ among the $k$ underlying \textsc{PCG} predicates.

The next proposition records the three basic special cases that justify our threshold viewpoint.

\begin{proposition}\label{prop:special-cases}
Let $k\ge 1$. Then:
\begin{enumerate}
    \item a graph is a \textsc{PCG} if and only if it is a \textsc{$(1,1)$-threshold-\textsc{PCG}};
    \item a graph is a \textsc{$k$-\textsc{OR}-\textsc{PCG}} if and only if it is a \textsc{$(k,1)$-threshold-\textsc{PCG}};
    \item a graph is a \textsc{$k$-\textsc{AND}-\textsc{PCG}} if and only if it is a \textsc{$(k,k)$-threshold-\textsc{PCG}}.
\end{enumerate}
\end{proposition}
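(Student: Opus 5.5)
All three items follow by unfolding the definition of a \textsc{$(k,t)$-threshold-\textsc{PCG}} and comparing the counting condition
\[
\bigl|\{\,i\in[k]: uv\in E_i\,\}\bigr|\ge t
\]
with the defining condition of the other class. In each case I will check, for a fixed pair of distinct vertices $u,v$, that this condition is equivalent to that class's adjacency rule. Throughout, both directions reuse the same witnessing family of \textsc{PCGs} $G_1,\dots,G_k$ on $V$.

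For item 1, take $k=t=1$. The count $|\{i\in[1]: uv\in E_i\}|$ is $1$ exactly when $uv\in E_1$ and $0$ otherwise, so the condition reads $uv\in E\iff uv\in E_1$, that is, $E=E_1$. Hence $G$ is a $(1,1)$-threshold-\textsc{PCG} iff $G$ equals some \textsc{PCG} on $V$, i.e.\ iff $G$ is a \textsc{PCG}. (Conversely, $G$ itself serves as the single witness $G_1$.)

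For item 2, take $t=1$. The count is at least $1$ iff there is some $i$ with $uv\in E_i$, i.e.\ iff $uv\in E_1\cup\cdots\cup E_k$. So the threshold condition for a family $G_1,\dots,G_k$ says exactly $E=\bigcup_i E_i$, which is the \textsc{$k$-\textsc{OR}-\textsc{PCG}} condition for the same family.

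For item 3, take $t=k$. The index set is a subset of $[k]$, so its size is at least $k$ iff it equals $[k]$. This holds iff $uv\in E_i$ for every $i$, i.e.\ iff $uv\in\bigcap_i E_i$. Hence the threshold condition for a family is exactly the \textsc{$k$-\textsc{AND}-\textsc{PCG}} condition $E=\bigcap_i E_i$ for that family.

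There is no real obstacle. The only point needing care is that both definitions quantify over the same objects: $k$ \textsc{PCGs} on the same vertex set $V$, with adjacency tested only on distinct pairs. Once that is noted, the witness family transfers unchanged in both directions.
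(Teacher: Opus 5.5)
Your proposal is correct and follows the paper's proof. Both unfold the threshold condition into equality with a single \textsc{PCG} (for $k=t=1$), a union (for $t=1$), and an intersection (for $t=k$), and both reuse the same witness family $G_1,\dots,G_k$ in each direction.
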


\begin{proof}
The first statement is immediate from the definitions. For the second statement, a pair $uv$ is adjacent in a \textsc{$(k,1)$-threshold-\textsc{PCG}} if and only if it belongs to at least one among $E_1,\dots,E_k$, that is, if and only if
\(
uv\in E_1\cup\cdots\cup E_k.
\)
Hence this is exactly \textsc{$k$-\textsc{OR}-\textsc{PCG}}. Similarly, a pair $uv$ is adjacent in a \textsc{$(k,k)$-threshold-\textsc{PCG}} if and only if it belongs to all of $E_1,\dots,E_k$, that is, if and only if
\(
uv\in E_1\cap\cdots\cap E_k.
\)
Thus this is exactly \textsc{$k$-\textsc{AND}-\textsc{PCG}}.
\end{proof}

Proposition~\ref{prop:special-cases} shows that \textsc{$(k,t)$-threshold-\textsc{PCG}}s provide a single umbrella under which several previously studied models can be treated uniformly.

\section{Universality When the Number of Predicates Grows}\label{sec:universality}

We now show that the threshold model becomes universal once the number of underlying \textsc{PCG} predicates is allowed to grow with the graph size.

We first record two simple padding lemmas. Since both $K_n$ and $\overline{K_n}$ are trivially \textsc{PCGs}, they may be used as auxiliary predicates in threshold constructions.

\begin{lemma}\label{lem:or-to-threshold}
Let $G$ be a graph on $n$ vertices. If $G$ is a \textsc{$q$-\textsc{OR}-\textsc{PCG}}, then $G$ is a \textsc{$(n,t)$-threshold-\textsc{PCG}} for every
\(
1 \le t \le n-q+1.
\)
\end{lemma}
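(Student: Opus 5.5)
We pad the given $q$ OR-predicates with trivial ones. Write $E=E_1\cup\cdots\cup E_q$, where each $G_i=(V,E_i)$ is a \textsc{PCG}. Fix $t$ with $1\le t\le n-q+1$. The plan is to build $n$ predicates in three groups:
\begin{itemize}
\item $t-1$ copies of $K_n$;
\item the $q$ original predicates $G_1,\dots,G_q$;
\item $n-q-t+1$ copies of $\overline{K_n}$.
\end{itemize}
The total count is $(t-1)+q+(n-q-t+1)=n$. Every group has nonnegative size: $t-1\ge 0$, $q\ge 0$, and $n-q-t+1\ge 0$ by the hypothesis on $t$. Both $K_n$ and $\overline{K_n}$ are \textsc{PCGs}, as the paper notes: take any tree on $n$ leaves and choose an interval that contains all distances, or none of them. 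Hence all $n$ predicates are legitimate \textsc{PCGs} on $V$.

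Next, we count acceptances for a pair $uv$. The $K_n$ copies always accept $uv$, contributing exactly $t-1$. The $\overline{K_n}$ copies never accept it, contributing $0$. So the number of accepting predicates is $t-1+m(uv)$, where $m(uv)=|\{i\in[q]:uv\in E_i\}|$. This count is at least $t$ if and only if $m(uv)\ge 1$, that is, if and only if $uv\in E$. Therefore $G$ is an \textsc{$(n,t)$-threshold-\textsc{PCG}}.

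The only real point to check is the boundary and degenerate cases. If $q>n$, the range of $t$ is empty and the statement is vacuous. If $t=n-q+1$, there are no $\overline{K_n}$ copies. If $t=1$, there are no $K_n$ copies and the construction reduces to Proposition~\ref{prop:special-cases}(2), padded with empty graphs. For small $n$ such as $n=1$, we should confirm that $K_n$ and $\overline{K_n}$ still have \textsc{PCG} representations; this holds vacuously, since there are no pairs. None of these cases is a serious obstacle.
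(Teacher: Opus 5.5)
Your proposal is correct and matches the paper's proof: pad the $q$ OR-predicates with $t-1$ copies of $K_n$ and $n-q-t+1$ copies of $\overline{K_n}$. Then every edge is accepted by at least $t$ predicates and every non-edge by exactly $t-1$. Your extra checks of the boundary cases and of the \textsc{PCG} representations of $K_n$ and $\overline{K_n}$ are sound and just make explicit what the paper leaves implicit.
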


\begin{proof}
Since $G$ is a \textsc{$q$-\textsc{OR}-\textsc{PCG}}, write 
\(
G = G_1 \cup \cdots \cup G_q,
\)
where each $G_i$ is a \textsc{PCG}. Add $t-1$ copies of $K_n$ and $n-q-(t-1)$ copies of $\overline{K_n}$. Then every edge of $G$ is accepted by at least one of $G_1,\dots,G_q$ and by all $t-1$ copies of $K_n$, hence by at least $t$ predicates in total. Every nonedge is accepted by none of $G_1,\dots,G_q$ and only by the $t-1$ copies of $K_n$, hence by exactly $t-1$ predicates. Therefore, $G$ is a \textsc{$(n,t)$-threshold-\textsc{PCG}}.
\end{proof}

\begin{lemma}\label{lem:and-to-threshold}
Let $G$ be a graph on $n$ vertices. If $G$ is a \textsc{$q$-\textsc{AND}-\textsc{PCG}}, then $G$ is a \textsc{$(n,t)$-threshold-\textsc{PCG}} for every
\(
q \le t \le n.
\)
\end{lemma}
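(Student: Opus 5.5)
The plan mirrors the proof of Lemma~\ref{lem:or-to-threshold}, with the two roles of the trivial predicates exchanged. Since $G$ is a \textsc{$q$-\textsc{AND}-\textsc{PCG}}, we write $E=E_1\cap\cdots\cap E_q$, where each $G_i=(V,E_i)$ is a \textsc{PCG}. The idea is to pad these $q$ predicates with trivial ones until there are $n$ in total and the threshold becomes exactly $t$. Only $q\le n$ is needed for the padding to make sense; this holds because $q\le t\le n$.

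We add $t-q$ copies of $K_n$ and $n-t$ copies of $\overline{K_n}$. Both are \textsc{PCGs}, and both counts are nonnegative because $q\le t\le n$. The total number of predicates is then $q+(t-q)+(n-t)=n$, as required.

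We then verify the two directions of the adjacency condition.
\begin{itemize}
\item If $uv\in E$, then $uv$ lies in all of $E_1,\dots,E_q$. It is also accepted by the $t-q$ complete graphs, so it receives exactly $t$ acceptances and meets the threshold.
\item If $uv\notin E$, then some $E_i$ with $i\in[q]$ rejects $uv$. Hence $uv$ receives at most $(q-1)+(t-q)=t-1$ acceptances, since the empty graphs contribute nothing, and it falls below the threshold.
\end{itemize}
Therefore $uv\in E$ if and only if at least $t$ of the $n$ predicates accept $uv$, so $G$ is a \textsc{$(n,t)$-threshold-\textsc{PCG}}.

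There is no real obstacle here. The only points needing care are checking that the padding counts $t-q$ and $n-t$ are nonnegative, which is exactly where the hypothesis $q\le t\le n$ enters, and noting that $K_n$ and $\overline{K_n}$ are \textsc{PCGs}, as already remarked before Lemma~\ref{lem:or-to-threshold}.
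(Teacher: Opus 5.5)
Your proposal is correct and follows the paper's proof exactly. Like the paper, you pad the $q$ constituent \textsc{PCG}s with $t-q$ copies of $K_n$ and $n-t$ copies of $\overline{K_n}$, so that every edge receives $t$ acceptances and every nonedge at most $(q-1)+(t-q)=t-1$; your explicit check that the padding counts are nonnegative is a detail the paper leaves implicit.
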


\begin{proof}
Since $G$ is a \textsc{$q$-\textsc{AND}-\textsc{PCG}}, write
\(
G = G_1 \cap \cdots \cap G_q,
\)
where each $G_i$ is a \textsc{PCG}. Add $t-q$ copies of $K_n$ and $n-t$ copies of $\overline{K_n}$. Then every edge of $G$ is accepted by all $q$ original predicates and by all $t-q$ copies of $K_n$, hence by at least $t$ predicates. Every nonedge is missing from at least one of $G_1,\dots,G_q$, so it is accepted by at most $(q-1)+(t-q)=t-1$ predicates. Therefore, $G$ is a \textsc{$(n,t)$-threshold-\textsc{PCG}}.
\end{proof}

\begin{theorem}\label{thm:universality}
For every integer $n \ge 1$ and every $t \in [n]$, every graph on $n$ vertices is a \textsc{$(n,t)$-threshold-\textsc{PCG}}.
\end{theorem}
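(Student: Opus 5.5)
The plan is a direct construction rather than an appeal to Lemma~\ref{lem:or-to-threshold} or Lemma~\ref{lem:and-to-threshold}. Those two lemmas only reach $t\le 2$ and $t\ge n-1$, so an intermediate $t$ needs a genuinely mixed family. Fix an ordering $v_1,\dots,v_n$ of $V(G)$ and write $N^+(v_i)$ for the neighbours of $v_i$ among $v_{i+1},\dots,v_n$, and $\overline{N}^+(v_i)$ for the non-neighbours of $v_i$ among $v_{i+1},\dots,v_n$. I will use three kinds of auxiliary graphs on $V(G)$.
\begin{itemize}
\item The \emph{star} $S_i$: its edges are exactly $v_iu$ for $u\in N^+(v_i)$.
\item The \emph{co-star} $C_i$: the complete graph minus the edges $v_iu$ for $u\in\overline{N}^+(v_i)$.
\item The \emph{padding graph} $P_a$: a clique on $\{v_{a+1},\dots,v_n\}$, with the remaining vertices isolated.
\end{itemize}

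\textbf{Step 1: the auxiliary graphs are PCGs.}
\begin{itemize}
\item For $S_i$, take a star tree. Give $v_i$ pendant weight $L$ with $L>1$, give $N^+(v_i)$ pendant weight $1$, and give all other leaves weight $3L$. Use the interval $[L+1,L+1]$. Distances within $N^+(v_i)$ are $2$, and every pair involving a far leaf has distance at least $3L+1$, so only the star edges are accepted.
\item For $C_i$, use a caterpillar. On an internal path $y$--$x$ of weight $1$, attach $v_i$ at $y$ with weight $1$ and the other vertices outside $\overline{N}^+(v_i)$ at $y$ with weight $0$. Attach the vertices of $\overline{N}^+(v_i)$ at $x$ with weight $1/2$. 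Use the interval $[0,2]$. The only distances exceeding $2$ are $d(v_i,s)=5/2$ for $s\in\overline{N}^+(v_i)$, which are exactly the excluded pairs.
\item $P_a$ is a clique plus isolated vertices, which is clearly a PCG: put the clique leaves at weight $0$ and the others at a large weight, and use a small interval.
\end{itemize}

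\textbf{Step 2: the family and the count.} Given $t\in[n]$, set $b=t-1$ and $a=n-1-b$, so $0\le a\le n-1$. Take the $n$ predicates
\[
S_1,\dots,S_a,\; C_{a+1},\dots,C_{n-1},\; P_a .
\]
Consider a pair $v_iv_j$ with $i<j$; only predicates indexed by $i$ or $j$ can treat it specially.
\begin{itemize}
\item No star $S_k$ with $k\ne i$ accepts the pair. In particular $S_j$ contains only edges from $v_j$ to later vertices, and $v_i$ is earlier.
\item Every co-star $C_k$ with $k\ne i$ accepts the pair, since $C_j$ removes only edges from $v_j$ to later vertices.
\item If $i\le a$: $S_i$ accepts the pair iff $v_iv_j\in E$, and $P_a$ rejects it because $v_i\notin\{v_{a+1},\dots,v_n\}$. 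All $b$ co-stars accept. The count is therefore $b+1$ for an edge and $b$ for a non-edge.
\item If $i>a$: $C_i$ accepts iff $v_iv_j\in E$, the other $b-1$ co-stars accept, and $P_a$ accepts because both endpoints lie in $\{v_{a+1},\dots,v_n\}$. The count is again $b+1$ for an edge and $b$ for a non-edge.
\end{itemize}
Thus with threshold $t=b+1$ the edges are exactly those of $G$, giving an $(n,t)$-threshold-PCG for every $t\in[n]$.

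\textbf{Main obstacle.} The delicate point is the uniformity of this count. Stars and co-stars contribute asymmetrically: with stars and co-stars alone, pairs whose earlier endpoint has index at most $a$ would get $b+1$ versus $b$, while pairs with earlier endpoint beyond $a$ would get $b$ versus $b-1$. The padding graph $P_a$ is what corrects this offset, and I will verify this case analysis carefully. The second thing to check carefully is the co-star tree realisation, since the triangle inequality forbids naive star-tree constructions there.
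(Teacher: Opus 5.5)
Your construction is correct, and it takes a genuinely different route from the paper. The vote count checks out. For $i<j$, no star $S_k$ with $k\neq i$ accepts $v_iv_j$, and every co-star $C_k$ with $k\neq i$ accepts it. When $i>n-t$ there is no star to carry the information and one co-star fewer votes unconditionally; $P_{n-t}$ supplies exactly that missing vote. So every edge gets exactly $t$ votes and every non-edge exactly $t-1$. Your three tree realizations are also right. Two small points. When $\overline{N}^+(v_i)=\emptyset$, the vertex $x$ of your caterpillar would be an unlabeled leaf, so take $C_i=K_n$ in that case. Your zero-weight pendant edges are acceptable, since the paper's own witness trees use them, and replacing $0$ by a small $\varepsilon>0$ changes no accepted pair anyway.

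The paper argues through the literature instead. For $n\le 7$ it uses that every graph is a \textsc{PCG} and pads with copies of $K_n$ and $\overline{K_n}$. For $n\ge 8$ it feeds the known bounds $\lceil (n-7)/3\rceil+1$ (\textsc{OR}) and $\lfloor n/2\rfloor$ (\textsc{AND}) into the two padding lemmas, and the resulting $t$-ranges overlap. So your remark that those lemmas only reach $t\le 2$ and $t\ge n-1$ holds only for the trivial $(n-1)$-term star and co-star decompositions. The paper's point is that sharper decompositions make pure \textsc{OR}-padding and pure \textsc{AND}-padding enough.

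Your mixed family avoids all of this. It is self-contained and needs no case split on $n$. It does not rely on the result that all graphs on at most seven vertices are \textsc{PCGs}, nor on quoting the exact \textsc{OR}/\textsc{AND} constants correctly. It also gives explicit witnesses built only from depth-two trees. The paper's route is shorter given its citations, and it shows more: only roughly $n/3$ or $n/2$ of the $n$ predicates need to be nontrivial, with the rest pure padding. Your family uses $n-1$ nontrivial predicates.
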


\begin{proof}
Let $G$ be any graph on $n$ vertices.

If $n\le 7$, then $G$ is a \textsc{PCG} by \cite{Calamoneri2012AllGWN}, hence a \textsc{$1$-\textsc{OR}-\textsc{PCG}}. Lemma~\ref{lem:or-to-threshold} with $q=1$ gives the claim for all $t\in[n]$.

Assume now that $n\ge 8$. By the known \textsc{OR} bound \cite{Calamoneri2021OnGOI}, every graph on $n$ vertices is a \textsc{$q_{\mathrm{OR}}$-\textsc{OR}-\textsc{PCG}} with
\(
q_{\mathrm{OR}}=\left\lceil \frac{1}{3}(n-7)\right\rceil+1.
\)
Hence Lemma~\ref{lem:or-to-threshold} shows that $G$ is a \textsc{$(n,t)$-threshold-\textsc{PCG}} for every
\(
1 \le t \le n-\left\lceil \frac{1}{3}(n-7)\right\rceil.
\)

Also, by the known \textsc{AND} bound \cite{Calamoneri2021OnGOI}, every graph on $n$ vertices is a \textsc{$q_{\mathrm{AND}}$-\textsc{AND}-\textsc{PCG}} with
\(
q_{\mathrm{AND}}=\left\lfloor \frac{n}{2}\right\rfloor.
\)
Hence Lemma~\ref{lem:and-to-threshold} shows that $G$ is a \textsc{$(n,t)$-threshold-\textsc{PCG}} for every
\(
\left\lfloor \frac{n}{2}\right\rfloor \le t \le n.
\)

These two ranges cover all of $[n]$, since
\(
n-\left\lceil \frac{1}{3}(n-7)\right\rceil \ge \left\lfloor \frac{n}{2}\right\rfloor.
\)
Therefore, $G$ is a \textsc{$(n,t)$-threshold-\textsc{PCG}} for every $t\in[n]$.
\end{proof}

Theorem~\ref{thm:universality} shows that the threshold framework becomes fully expressive once the number of predicates is allowed to grow linearly with the number of vertices.

\section{Asymptotic Sparsity for Fixed $k$}\label{sec:counting}

We now consider the opposite regime, where the number of underlying predicates is fixed. Even if the threshold parameter is allowed to vary over all values in $[k]$, the resulting graph classes remain asymptotically negligible.

\begin{lemma}\label{lem:pcg-count}
There exists an absolute constant $C>0$ such that the number $P_n$ of labeled
\textsc{PCGs} on vertex set $[n]$ satisfies $P_n \le e^{C n\log n}$.
\end{lemma}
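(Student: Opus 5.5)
The plan is to show that every \textsc{PCG} on vertex set $[n]$ is determined by a small amount of combinatorial data, and then to count that data.

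\textbf{Step 1: normalize the witness tree.} Fix a representation $G=\textsc{PCG}(T,d_{\min},d_{\max})$ with the leaves labelled by $[n]$ via $\zeta$. We first make $T$ reduced.
\begin{itemize}
\item An internal vertex of degree $2$ can be suppressed by merging its two incident edges into one edge whose weight is the sum. This leaves all leaf-to-leaf distances unchanged.
\item Internal leaves cannot occur, since the leaves of $T$ are exactly $L(T)$.
\end{itemize}
So $T$ becomes a reduced tree with $n$ labelled leaves, $m\le n-2$ internal vertices, and at most $2n-3$ edges. The number of labelled topologies (trees on $n+m$ vertices whose leaves are exactly the labelled set) is at most $(2n)^{2n}$, by Cayley's formula applied to at most $2n$ vertices. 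That is $e^{O(n\log n)}$.

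\textbf{Step 2: reduce the weights to a sign pattern.} With the topology fixed, let $w\in\mathbb{R}_{\ge0}^{N}$, $N\le 2n$, be the edge weights. For each pair $\{u,v\}$, the distance $d_T(u,v)$ is a linear form $\ell_{uv}(w)$ with $0/1$ coefficients. Adjacency of $uv$ is decided by the signs of the two linear forms $\ell_{uv}(w)-d_{\min}$ and $d_{\max}-\ell_{uv}(w)$. These are $2\binom n2$ affine (indeed linear) forms in the $N+2$ real variables $(w,d_{\min},d_{\max})$. Hence the graph $G$ is a function of the topology and of the sign vector of these $M\le n^2$ forms.

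\textbf{Step 3: count sign patterns.} This is the main point. By the classical bound on the number of cells of a hyperplane arrangement (or Warren's theorem), $M$ linear forms in $D$ variables realize at most $(eM/D)^{D}\cdot O(1)^D$, and in any case at most $\sum_{i\le D}\binom{2M}{i}\le (2M+1)^{D}$, distinct sign vectors in $\{-,0,+\}^M$. With $M\le n^2$ and $D\le 2n+2$, this gives at most $(2n^2+1)^{2n+2}=e^{O(n\log n)}$ patterns. The only thing to check is that the sign-vector count is uniform over each fixed topology, and the cell-counting bound gives exactly that.

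\textbf{Step 4: combine.} Multiply the count of topologies by the count of sign patterns per topology:
\[
P_n\le (2n)^{2n}(2n^2+1)^{2n+2}\le e^{Cn\log n}
\]
for a suitable absolute constant $C$, for all $n\ge2$ (the case $n=1$ is trivial).

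I expect Step 3 to be the only non-routine ingredient, and I would invoke Warren's theorem (or Zaslavsky's cell count) as a black box. An alternative I would keep in reserve: perturb and rescale so that all weights are integers of bit-size $O(n\log n)$ without changing $G$. That works via Cramer's rule on the vertex of the feasible polytope, but it is messier.
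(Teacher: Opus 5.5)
Your proposal is correct and follows essentially the same route as the paper: suppress degree-$2$ vertices to get a reduced tree, bound the number of leaf-labelled topologies by $e^{O(n\log n)}$, note that for a fixed topology the graph is determined by the sign pattern of $O(n^2)$ affine forms in $O(n)$ variables, bound these patterns by $e^{O(n\log n)}$, and multiply. The differences are only cosmetic: you count topologies with Cayley's formula, which covers non-binary reduced trees directly, where the paper quotes the $(2n-5)!!$ count of binary topologies; and you use the hyperplane-arrangement cell bound where the paper cites Warren's theorem.
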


\begin{proof}
Let $G$ be a labeled \textsc{PCG} on vertex set $[n]$. By suppressing degree-$2$
internal vertices in a witness tree, we may assume that $G=\textsc{PCG}(T,d_{\min},d_{\max})$
for some reduced weighted tree $T$ with leaf set $[n]$. Such a tree has at most $2n-3$
edges, and the number of leaf-labeled reduced binary tree topologies on $[n]$ is
$(2n-5)!!=e^{O(n\log n)}$ \cite{Felsenstein1978,SempleSteel2003}.

Fix one such topology $T$, and let $w_1,\dots,w_m$ be its edge weights, where
$m\le 2n-3$. For each pair of leaves $u,v\in[n]$, the distance $d_T(u,v)$ is a linear
form in the variables $w_1,\dots,w_m$. Hence, for fixed $T$, the represented graph is
determined by the sign pattern of the $O(n^2)$ linear polynomials
$d_T(u,v)-d_{\min}$ and $d_T(u,v)-d_{\max}$ in the $m+2=O(n)$ variables
$w_1,\dots,w_m,d_{\min},d_{\max}$. By Theorem 3 of \cite{Warren1968}, the number of such sign
patterns is at most $e^{O(n\log n)}$.

Multiplying by the $e^{O(n\log n)}$ possible reduced leaf-labeled tree topologies gives
$P_n \le e^{O(n\log n)}$, as required.
\end{proof}
\begin{theorem}\label{thm:sparse-uniform-k}
Let $k \ge 1$ be fixed. Then there exists an integer $n_0$ such that, for every
$n \ge n_0$, there exists a graph on $n$ vertices that is neither a
$(k,t)$-threshold-\textsc{PCG} nor the complement of a
$(k,t)$-threshold-\textsc{PCG}, for every $t \in [k]$.
\end{theorem}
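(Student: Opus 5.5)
This is a counting argument built on Lemma~\ref{lem:pcg-count}. We bound the number of labeled graphs on vertex set $[n]$ that lie in the union of all classes in question, and show this is smaller than $2^{\binom n2}$, the total number of labeled graphs on $[n]$.

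First, fix $t\in[k]$. A $(k,t)$-threshold-\textsc{PCG} on $[n]$ is determined by the ordered $k$-tuple $(G_1,\dots,G_k)$ of labeled \textsc{PCG}s on $[n]$. The edge set is a function of the tuple: $uv\in E$ iff at least $t$ of the $G_i$ contain $uv$. Hence, by Lemma~\ref{lem:pcg-count}, the number of labeled $(k,t)$-threshold-\textsc{PCG}s on $[n]$ is at most
\[
P_n^k\le e^{Ckn\log n}.
\]
Complementation is a bijection on labeled graphs, so the number of labeled graphs whose complement is a $(k,t)$-threshold-\textsc{PCG} obeys the same bound. Taking the union over the $k$ choices of $t$ and over the two options (the graph itself or its complement), the number of labeled graphs on $[n]$ that are "bad" for some $t\in[k]$ is at most
\[
2k\,e^{Ckn\log n}.
\]

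Second, compare this with the total. Since $k$ and $C$ are fixed, we have $\log\bigl(2k\,e^{Ckn\log n}\bigr)=O(n\log n)$, whereas $\log 2^{\binom n2}=\Theta(n^2)$. So there is an $n_0$ such that for all $n\ge n_0$,
\[
2k\,e^{Ckn\log n}<2^{\binom n2}.
\]
For each such $n$, some labeled graph on $[n]$ lies outside every one of the $2k$ families. This graph is therefore neither a $(k,t)$-threshold-\textsc{PCG} nor the complement of one, simultaneously for every $t\in[k]$, which proves the theorem.

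The only point needing care is that membership in these classes is a property of labeled graphs: the bijection $\zeta$ onto the leaves is part of the representation, and Lemma~\ref{lem:pcg-count} already counts labeled \textsc{PCG}s with arbitrary leaf labelings. The injection from tuples to threshold graphs is therefore valid as stated. Once the labeled counting is set up correctly, the inequality is routine, so I expect no real obstacle beyond this.
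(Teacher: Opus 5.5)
Your proposal is correct and is essentially the paper's own argument. Like the paper, you bound the labeled $(k,t)$-threshold-\textsc{PCG}s on $[n]$ by $P_n^k\le e^{Ckn\log n}$ via $k$-tuples of labeled \textsc{PCG}s, multiply by $2k$ to cover all thresholds and complements, and compare with $2^{\binom n2}=e^{\Theta(n^2)}$. One wording fix: the map from $k$-tuples to graphs is a surjection onto the class, since many tuples can give the same graph, not an injection. A surjection is exactly what the bound $P_n^k$ requires.
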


\begin{proof}
Let $P_n$ denote the number of labeled \textsc{PCGs} on vertex set $[n]$.
By Lemma \ref{lem:pcg-count},
\(
P_n \le e^{Cn\log n}.
\)

For fixed $n,k,t$, let $T_{n,k,t}$ denote the number of labeled
\textsc{$(k,t)$-threshold-\textsc{PCG}}s on $[n]$. Any ordered $k$-tuple of labeled
\textsc{PCGs} $(G_1,\dots,G_k)$ on $[n]$ determines, via the threshold rule, at most one
labeled \textsc{$(k,t)$-threshold-\textsc{PCG}}. Since there are $P_n$ choices for each
$G_i$, there are at most $(P_n)^k$ such tuples. Hence
\[
T_{n,k,t}\le (P_n)^k \le e^{Ck\,n\log n}.
\]

Let $\mathcal{F}_{n,k}$ be the family of labeled graphs on $[n]$ that are
$(k,t)$-threshold-\textsc{PCGs} for some $t\in[k]$, together with their complements.
Then
\[
|\mathcal{F}_{n,k}|
\le \sum_{t=1}^k 2T_{n,k,t}
\le 2k\,e^{Ck\,n\log n}.
\]

On the other hand, the total number of labeled graphs on $[n]$ is
\(
2^{\binom{n}{2}} = e^{\Theta(n^2)}.
\)
Since $k$ is fixed,
\(
2k\,e^{Ck\,n\log n}=o\!\left(2^{\binom{n}{2}}\right).
\)
Hence, for all sufficiently large $n$,
\(
2k\,e^{Ck\,n\log n}<2^{\binom{n}{2}}.
\)
Therefore, some labeled graph on $[n]$ lies outside $\mathcal{F}_{n,k}$.
By definition, this graph is neither a $(k,t)$-threshold-\textsc{PCG} nor the complement of one, for every $t\in[k]$.
\end{proof}

It is convenient to record the counting consequence explicitly.

\begin{corollary}\label{cor:density-zero-uniform}
Let $k\ge 1$ be fixed, and let $\mathcal{T}_{n,k}$ denote the family of labeled graphs on $[n]$ that are $(k,t)$-threshold-\textsc{PCGs} for some $t\in[k]$. Then
\[
\frac{|\mathcal{T}_{n,k}|}{2^{\binom{n}{2}}}\to 0
\qquad\text{as } n\to\infty.
\]
In particular, the union over all thresholds $t\in[k]$ still has asymptotic density zero.
\end{corollary}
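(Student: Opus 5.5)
The corollary follows from the counting already carried out in the proof of Theorem~\ref{thm:sparse-uniform-k}. I would bound $|\mathcal{T}_{n,k}|$ directly from above and compare it with the total number of labeled graphs.

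\textbf{Step 1 (upper bound on $|\mathcal{T}_{n,k}|$).} Let $T_{n,k,t}$ be the number of labeled $(k,t)$-threshold-\textsc{PCG}s on $[n]$. Every such graph arises from an ordered $k$-tuple $(G_1,\dots,G_k)$ of labeled \textsc{PCG}s on $[n]$ via the threshold rule. Lemma~\ref{lem:pcg-count} bounds the number of labeled \textsc{PCG}s by $P_n\le e^{Cn\log n}$, so
\[
T_{n,k,t}\le P_n^k\le e^{Ckn\log n}.
\]
By the union bound over $t\in[k]$,
\[
|\mathcal{T}_{n,k}|\le \sum_{t=1}^k T_{n,k,t}\le k\,e^{Ckn\log n}.
\]

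\textbf{Step 2 (comparison with $2^{\binom n2}$).} The total number of labeled graphs on $[n]$ is $2^{\binom{n}{2}}=e^{(\ln 2)\,n(n-1)/2}$. Therefore
\[
\frac{|\mathcal{T}_{n,k}|}{2^{\binom n2}}\le \exp\!\Bigl(\ln k + Ckn\log n-\tfrac{\ln 2}{2}n(n-1)\Bigr).
\]
Since $k$ and $C$ are fixed, the exponent is $-\Theta(n^2)+O(n\log n)$, which tends to $-\infty$. Hence the ratio tends to $0$. The final sentence of the corollary is just a restatement, because $\mathcal{T}_{n,k}$ is by definition the union over all $t\in[k]$.

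\textbf{Main obstacle.} The only substantive ingredient is the $e^{O(n\log n)}$ bound on $P_n$, which Lemma~\ref{lem:pcg-count} already supplies. Given that lemma, the remaining work is routine asymptotics. The one point needing care is that the factor $k$ from the union over thresholds, and the exponent $k$ from $k$-tuples, are both constants, so they cannot overcome the quadratic gap.
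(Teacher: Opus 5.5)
Your proposal is correct and follows essentially the same route as the paper: bound each $T_{n,k,t}$ by $P_n^k\le e^{Ckn\log n}$ via Lemma~\ref{lem:pcg-count}, sum over the $k$ thresholds to get $|\mathcal{T}_{n,k}|\le k\,e^{Ckn\log n}$, and compare with $2^{\binom{n}{2}}=e^{\Theta(n^2)}$. Your explicit exponent computation spells out the paper's final asymptotic step in a little more detail.
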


\begin{proof}
By the proof above,
\[
|\mathcal{T}_{n,k}|
\le \sum_{t=1}^k T_{n,k,t}
\le k\,e^{Ck\,n\log n}.
\]
Since $2^{\binom{n}{2}}=e^{\Theta(n^2)}$ and $k$ is fixed, the ratio tends to zero.
\end{proof}

Theorem~\ref{thm:sparse-uniform-k} and Corollary~\ref{cor:density-zero-uniform}
stand in sharp contrast both with Section~\ref{sec:universality} and with the early
optimism surrounding \textsc{PCGs}. While the model was introduced by Kearney, Munro,
and Phillips \cite{Kearney2003EfficientGOU}, it was later conjectured that every graph
might be a \textsc{PCG}, a possibility subsequently disproved in
\cite{Yanhaona2010DiscoveringPCV}. Our result shows that every fixed threshold extension remains
far from universal in a much stronger asymptotic sense: for fixed $k$, even after allowing
all thresholds $t\in[k]$ and even up to complement, these classes occupy only an
asymptotically vanishing corner of the graph universe.

\section{Separating Multi-interval and Threshold Models}\label{sec:consequences}

We now examine more closely the relationship between threshold combinations and
multi-interval representations. These two frameworks arise from different ways of
relaxing the classical \textsc{PCG} model. In a \textsc{$k$-interval-\textsc{PCG}},
one keeps a single witness tree and increases expressive power by allowing several
admissible distance intervals on that tree \cite{Ahmed2017MultiintervalPCAP}. In a
\textsc{$(k,t)$-threshold-\textsc{PCG}}, by contrast, one allows several independent
\textsc{PCG} predicates and combines them through a threshold rule. Since both are natural extensions of \textsc{PCG}, it is
important to compare their expressive strength. The next theorem shows that, from an
asymptotic viewpoint, threshold combinations can be substantially more powerful than
multi-interval representations.

\begin{theorem}\label{thm:threshold-vs-interval} Let $f(k)\in o\!\left(\frac{k^2}{\log k}\right)$, and let $t$ be an integer such that $1\le t\le k$. Then there exists an integer $k_0$ such that, for every $k\ge k_0$, there exists a graph that is a \textsc{$(k,t)$-threshold-\textsc{PCG}} but not an \textsc{$f(k)$-interval-\textsc{PCG}}. \end{theorem}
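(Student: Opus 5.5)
The plan is a counting argument at $n=k$ vertices, combining the universality result of Section~\ref{sec:universality} with a refinement of the sign-pattern count in Lemma~\ref{lem:pcg-count}. By Theorem~\ref{thm:universality}, every graph on $k$ vertices is a \textsc{$(k,t)$-threshold-\textsc{PCG}} for every $t\in[k]$. So there are exactly $2^{\binom{k}{2}}$ labeled \textsc{$(k,t)$-threshold-\textsc{PCG}}s on $[k]$. It then suffices to show that, for all large $k$, the number $I_{k}$ of labeled \textsc{$f(k)$-interval-\textsc{PCG}}s on $[k]$ is strictly smaller than $2^{\binom{k}{2}}$. Any labeled graph on $[k]$ not counted by $I_k$ is then the required witness.

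To bound $I_{k}$, write $q=f(k)$ and argue as in Lemma~\ref{lem:pcg-count}.
\begin{enumerate}
\item Suppress degree-$2$ vertices, so the witness tree is reduced with leaf set $[k]$. There are $e^{O(k\log k)}$ leaf-labeled topologies, each with $m\le 2k-3$ edges.
\item Fix a topology. The represented graph is determined by the signs of the linear forms $d_T(u,v)-a_i$ and $d_T(u,v)-b_i$, where $I_i=[a_i,b_i]$. Open or half-open intervals are also covered, since the sign pattern, zeros included, still decides membership.
\item These forms number $M=2q\binom{k}{2}$ and live in $N=m+2q\le 2k+2q$ real variables: the edge weights and the interval endpoints.
\item Warren's theorem \cite{Warren1968}, for degree-$1$ polynomials, bounds the number of sign patterns by $(4eM/N)^{N}$.
\end{enumerate}
Since $M/N\le 2q\binom{k}{2}/(2q)\le k^2$, this gives at most $e^{O((k+q)\log k)}$ sign patterns. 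Hence
\[
I_{k}\le e^{O(k\log k)}\cdot e^{O((k+f(k))\log k)}=e^{O((k+f(k))\log k)}.
\]

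Now use the hypothesis $f(k)\in o(k^2/\log k)$. It gives $f(k)\log k=o(k^2)$, and trivially $k\log k=o(k^2)$. So $\log I_{k}=o(k^2)$, while $\log 2^{\binom{k}{2}}=\Theta(k^2)$. Hence there is $k_0$ with $I_k<2^{\binom{k}{2}}$ for all $k\ge k_0$. This $k_0$ depends on $f$ and on the absolute constants, but not on $t$, since the universality side holds for all $t$ simultaneously.

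The main obstacle is the bookkeeping in the Warren step. The hidden constant must be absolute, and the logarithmic factor must be $\log k$ rather than something like $\log(kf(k))$ that could grow faster. This is handled by the observation $M/N=O(k^2)$ regardless of $q$: the number of forms grows linearly in $q$, but so does the number of variables, so their ratio stays bounded by $O(k^2)$. Minor points are that $f(k)\ge 1$ may be assumed (otherwise the class is even smaller) and that merging overlapping intervals is not needed for the count.
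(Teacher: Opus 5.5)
Your proposal is correct and follows essentially the same route as the paper: apply universality at $n=k$, then count $e^{O(k\log k)}$ reduced topologies times a per-topology count of realizable sign patterns. The paper phrases the per-topology step as a cell count for an arrangement of $H=O(k^2f(k)+f(k)^2)$ hyperplanes in dimension $O(k+f(k))$, and uses the hypothesis on $f$ to get $H=O(k^4)$; you use Warren's bound instead and drop the endpoint--endpoint comparisons, so $M/N=O(k^2)$ holds for every $f$. One small nitpick: Warren's theorem counts only sign patterns with no zero entries, so either push the closed-interval endpoints slightly outward to avoid zeros or use the standard zero-inclusive variant, which changes only the constant.
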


\begin{proof}
Let $I_k^{(f)}$ denote the number of labeled \textsc{$f(k)$-interval-\textsc{PCG}}s on
vertex set $[k]$. By Theorem~\ref{thm:universality}, every graph on $k$ vertices is a
\textsc{$(k,t)$-threshold-\textsc{PCG}}. Thus it suffices to show that
\(
I_k^{(f)} < 2^{\binom{k}{2}}
\)
for all sufficiently large $k$.

Fix a labeled \textsc{$f(k)$-interval-\textsc{PCG}} on $[k]$. After suppressing
degree-$2$ internal vertices in a witness tree, we may assume that its representation
uses a reduced weighted tree with leaf set $[k]$ and at most $2k-3$ edges. The number of
such leaf-labeled tree topologies is $e^{O(k\log k)}$.

Now fix one such topology. Its representation is determined by at most $2k-3$ edge
weights together with at most $2f(k)$ interval endpoints, so the relevant parameter space
has dimension
\(
d=O(k+f(k)).
\)
For this fixed topology, the represented graph changes only when the relative order of a
leaf-to-leaf distance and an interval endpoint changes. Since there are $O(k^2)$
leaf-to-leaf distances and $2f(k)$ endpoints, the parameter space is cut by a hyperplane
arrangement of size
\(
H=O(k^2f(k)+f(k)^2).
\)
Because $f(k)\in o(k^2/\log k)$, we in particular have $H=O(k^4)$.

For a fixed reduced leaf-labeled tree topology, let \(d=O(k+f(k))\) denote the number
of real parameters (the edge weights together with the interval endpoints), and let
\(H=O(k^2f(k)+f(k)^2)\) denote the number of relevant comparison hyperplanes. The
parameter space is therefore partitioned by an arrangement of at most \(H\) hyperplanes
in \(\mathbb R^d\). Since the represented graph is constant on each cell of this
arrangement, the number of distinct graphs realizable from this fixed topology is at most
the number of cells.

By the standard bound on the number of cells in a hyperplane arrangement, this number is
at most
\(
\sum_{i=0}^d \binom{H}{i}
\)
(see, e.g., \cite{Zaslavsky1975Facing}). Using
\(
\sum_{i=0}^d \binom{H}{i}
\le (d+1)\binom{H}{d}
\le (d+1)\left(\frac{eH}{d}\right)^d,
\)
we obtain
\(
\sum_{i=0}^d \binom{H}{i}
= e^{O\!\left(d\log(H/d)\right)}.
\)
Now \(d=O(k+f(k))\), and since \(f(k)\in o(k^2/\log k)\), we have
\(
H=O(k^4)
\),
so
\(
\log(H/d)=O(\log k).
\)
Therefore, the number of realizable graphs for this fixed topology is at most
\(
e^{O((k+f(k))\log k)}.
\)

Finally, the number of reduced leaf-labeled tree topologies on \([k]\) is
\(e^{O(k\log k)}\). Multiplying these bounds, we obtain
\(
I_k^{(f)}
\le e^{O(k\log k)}\cdot e^{O((k+f(k))\log k)}
 = e^{O((k+f(k))\log k)}.
\)

On the other hand, the total number of labeled graphs on $[k]$ is
$2^{\binom{k}{2}}=e^{\Theta(k^2)}$. Since $f(k)\in o(k^2/\log k)$, we have
$(k+f(k))\log k=o(k^2)$. Therefore
\[
I_k^{(f)} = e^{o(k^2)} = o\!\left(2^{\binom{k}{2}}\right).
\]
So for all sufficiently large $k$, there exists a graph on $k$ vertices that is not an
\textsc{$f(k)$-interval-\textsc{PCG}}. Since every graph on $k$ vertices is a
\textsc{$(k,t)$-threshold-\textsc{PCG}}, the result follows.
\end{proof}
A particularly relevant consequence is obtained by taking $t=1$ and $f(k)=k$.

\begin{corollary}\label{cor:kinterval-vs-kor}
There exists an integer $k_0$ such that, for every $k\ge k_0$, there exists a graph
that is a \textsc{$k$-\textsc{OR}-\textsc{PCG}} but not a
\textsc{$k$-interval-\textsc{PCG}}. In particular, for every sufficiently large $k$,
\(
k\textsc{-interval-\textsc{PCG}} \subsetneq k\textsc{-OR-\textsc{PCG}}.
\)
\end{corollary}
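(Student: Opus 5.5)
The corollary follows by specializing Theorem~\ref{thm:threshold-vs-interval} and then establishing the inclusion separately. First, take $t=1$ and $f(k)=k$. Since $k/(k^2/\log k)=\log k/k\to 0$, we have $f(k)\in o(k^2/\log k)$, so the theorem applies. It yields $k_0$ such that for every $k\ge k_0$ there is a graph that is a $(k,1)$-threshold-\textsc{PCG} but not a $k$-interval-\textsc{PCG}. By Proposition~\ref{prop:special-cases}(2), a $(k,1)$-threshold-\textsc{PCG} is exactly a $k$-\textsc{OR}-\textsc{PCG}, which gives the first claim. Concretely, this graph lives on $k$ vertices and exists because of Theorem~\ref{thm:universality} together with the counting bound.

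For the strict inclusion, we still need $k$-interval-\textsc{PCG} $\subseteq$ $k$-\textsc{OR}-\textsc{PCG}. Let $G$ be a $k$-interval-\textsc{PCG} with witness tree $T$, bijection $\zeta$, and intervals $I_1,\dots,I_k$. For each $i$, define $G_i=\textsc{PCG}(T,I_i)$ on the same vertex set using the same $\zeta$. Then $uv\in E(G)$ iff $d_T(\zeta(u),\zeta(v))\in\bigcup_i I_i$ iff $uv\in\bigcup_i E(G_i)$, so $G$ is a $k$-\textsc{OR}-\textsc{PCG}.

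The only subtlety is the interval type. The \textsc{PCG} definition uses closed intervals $[d_{\min},d_{\max}]$, while the $k$-interval definition allows arbitrary intervals. We handle this as follows. Only finitely many distances $d_T(x,y)$ occur, so any interval $I_i$ can be replaced by the closed interval spanned by the occurring distances lying in it, without changing the set of accepted pairs. If $I_i$ contains no occurring distance, we let $G_i$ be the edgeless graph, which is a \textsc{PCG} via an interval placed above all distances.

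Combining the inclusion with the separating graph gives $k$-interval-\textsc{PCG} $\subsetneq$ $k$-\textsc{OR}-\textsc{PCG} for all $k\ge k_0$. I expect no real obstacle; the one step needing care is the closed-versus-general interval normalization above.
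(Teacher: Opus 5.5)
Your proposal is correct and follows the paper's proof: specialize Theorem~\ref{thm:threshold-vs-interval} to $t=1$ and $f(k)=k$, then identify $(k,1)$-threshold-\textsc{PCG}s with $k$-\textsc{OR}-\textsc{PCG}s via Proposition~\ref{prop:special-cases}. The only difference is in the inclusion $k$-interval-\textsc{PCG} $\subseteq$ $k$-\textsc{OR}-\textsc{PCG}: the paper simply cites \cite{Calamoneri2021OnGOI}, whereas you prove it directly, using one \textsc{PCG} per interval on the common tree together with your closed-interval normalization (which is sound because each $I_i$ is convex and only finitely many distances occur).
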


\begin{proof}
Apply Theorem~\ref{thm:threshold-vs-interval} with $t=1$ and $f(k)=k$. Since
\(
(k,1)\textsc{-threshold-\textsc{PCG}} = k\textsc{-OR-\textsc{PCG}},
\)
and it was shown in \cite{Calamoneri2021OnGOI} that
\(
k\textsc{-interval-\textsc{PCG}} \subset k\textsc{-OR-\textsc{PCG}},
\)
the result follows.
\end{proof}

Thus, we show that, for all sufficiently large $k$, \textsc{$k$-\textsc{OR}-\textsc{PCG}} is strictly
more expressive than \textsc{$k$-interval-\textsc{PCG}}, resolving an earlier open problem \cite{Calamoneri2021OnGOI, survey}.

\section{Set-system Incidence Graphs as Explicit Obstructions}\label{sec:incidence}

The counting arguments of the previous sections show that fixed threshold classes are
asymptotically sparse, but they do not identify concrete obstructions. In the literature
on \textsc{PCG} variants, this distinction is important. The Ramsey-type arguments of
\cite{Calamoneri2021OnGOI} imply that, for every fixed integer $r$, there exist graphs
outside \textsc{$r$-\textsc{OR}-\textsc{PCG}} and outside
\textsc{$r$-\textsc{AND}-\textsc{PCG}}, but those arguments are existential and do not
produce explicit examples. The same paper and the recent survey \cite{survey} highlight
the need for sharper obstruction families, especially for small values of $r$. In
particular, \cite{Calamoneri2021OnGOI} asks for the smallest graph that is not a
\textsc{$2$-\textsc{OR}-\textsc{PCG}} (and similarly for
\textsc{$2$-\textsc{AND}-\textsc{PCG}} and \textsc{$2$-interval-\textsc{PCG}}).

In this section we address this issue by studying bipartite incidence graphs of set
systems. These graphs are particularly well suited to threshold-\textsc{PCG} lower
bounds, because each constituent \textsc{PCG} induces only a controlled number of
neighborhoods on one side. We first prove a shell bound for one constituent
\textsc{PCG}, then combine these bounds under threshold aggregation, and finally
specialize to the family of all $q$-subsets of a $p$-element set. This yields explicit
infinite families outside low-level threshold classes. 
It also applies to the family

\begin{definition}
Let $A$ be a finite set and let $\mathcal S\subseteq 2^A$. The \emph{set-system incidence
graph} of $(A,\mathcal S)$, denoted $G(A,\mathcal S)$, is the bipartite graph with
bipartition
\(
A\cup B,
\)
where
\(
B=\{b_S:S\in\mathcal S\},
\)
and adjacency defined by
\(
ab_S\in E(G(A,\mathcal S))
\ \Longleftrightarrow\ 
a\in S.
\)
Thus, the neighborhoods of the vertices on the $B$-side are precisely the sets in
$\mathcal S$.
\end{definition}

\begin{definition}
Let $A$ be a set of size $p$, and let $0\le q\le p$. The \emph{$(p,q)$-incidence graph},
denoted $I(p,q)$, is the set-system incidence graph
\(
G\!\left(A,\binom{A}{q}\right).
\)
Equivalently, $I(p,q)$ is the bipartite graph with bipartition
\(
A\cup B,
\ 
B=\{b_S:S\subseteq A,\ |S|=q\},
\)
where
\(
N(b_S)=S
\ \text{for each }S\subseteq A,\ |S|=q.
\)
\end{definition}

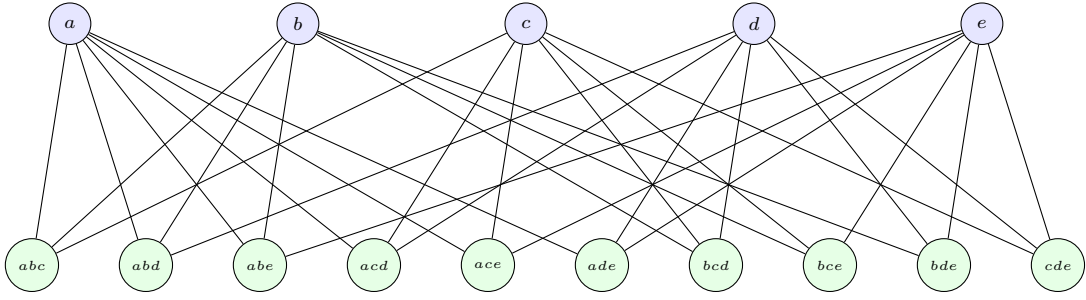
\begin{figure}[t]
\centering
\begin{tikzpicture}[
    x=1cm,y=1cm,
    topv/.style={circle,draw,fill=blue!10,minimum size=5.5mm,inner sep=0pt,font=\scriptsize},
    botv/.style={circle,draw,fill=green!10,minimum size=7mm,inner sep=0pt,font=\tiny},
    edge/.style={thin},
    partlabel/.style={font=\scriptsize\bfseries}
]

\node[topv] (a) at (0.5,3.2) {$a$};
\node[topv] (b) at (3.5,3.2) {$b$};
\node[topv] (c) at (6.5,3.2) {$c$};
\node[topv] (d) at (9.5,3.2) {$d$};
\node[topv] (e) at (12.5,3.2) {$e$};

\node[botv] (abc) at (0,0) {$abc$};
\node[botv] (abd) at (1.5,0) {$abd$};
\node[botv] (abe) at (3,0) {$abe$};
\node[botv] (acd) at (4.5,0) {$acd$};
\node[botv] (ace) at (6,0) {$ace$};
\node[botv] (ade) at (7.5,0) {$ade$};
\node[botv] (bcd) at (9,0) {$bcd$};
\node[botv] (bce) at (10.5,0) {$bce$};
\node[botv] (bde) at (12,0) {$bde$};
\node[botv] (cde) at (13.5,0) {$cde$};

\draw[edge] (a) -- (abc);
\draw[edge] (a) -- (abd);
\draw[edge] (a) -- (abe);
\draw[edge] (a) -- (acd);
\draw[edge] (a) -- (ace);
\draw[edge] (a) -- (ade);

\draw[edge] (b) -- (abc);
\draw[edge] (b) -- (abd);
\draw[edge] (b) -- (abe);
\draw[edge] (b) -- (bcd);
\draw[edge] (b) -- (bce);
\draw[edge] (b) -- (bde);

\draw[edge] (c) -- (abc);
\draw[edge] (c) -- (acd);
\draw[edge] (c) -- (ace);
\draw[edge] (c) -- (bcd);
\draw[edge] (c) -- (bce);
\draw[edge] (c) -- (cde);

\draw[edge] (d) -- (abd);
\draw[edge] (d) -- (acd);
\draw[edge] (d) -- (ade);
\draw[edge] (d) -- (bcd);
\draw[edge] (d) -- (bde);
\draw[edge] (d) -- (cde);

\draw[edge] (e) -- (abe);
\draw[edge] (e) -- (ace);
\draw[edge] (e) -- (ade);
\draw[edge] (e) -- (bce);
\draw[edge] (e) -- (bde);
\draw[edge] (e) -- (cde);

\end{tikzpicture}
\caption{The incidence graph \(I(5,3)\). A vertex \(x\in A = \{a, b, c, d, e\}\) is adjacent to a subset \(S\in \binom{A}{3}\) exactly when \(x\in S\). This is the first graph proven not be a PCG. \cite{Yanhaona2010DiscoveringPCV}}
\label{fig:I53}
\end{figure}

Our first result is a local bound on the number of neighborhoods induced by a single
interval on a fixed tree. For the sake of brevity, we leave the proof for Appendix~\ref{app:ssb}.

\begin{lemma}[Single-shell bound]\label{lem:single-shell-bound}
There exists an absolute constant \(C>0\) such that the following holds.

Let \(T\) be a reduced weighted tree with leaf set \(A\), where \(|A|=m\), and let
\(I=[\alpha,\beta]\) be an interval of real numbers. Define
\[
\mathcal F(T,I):=
\Bigl\{
\{a\in A:d_T(x,a)+\lambda\in I\}: x\in T,\ \lambda\in\mathbb R
\Bigr\},
\]
where \(x\in T\) means that \(x\) is an arbitrary point of the geometric realization of
the tree \(T\). Then
\(
|\mathcal F(T,I)|\le C\,m^3.
\)
\end{lemma}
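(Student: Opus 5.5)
Fix a point \(x\) of the tree \(T\). The set \(\{a\in A: d_T(x,a)+\lambda\in I\}\) is the set of leaves whose distance from \(x\) lies in the window \([\alpha-\lambda,\beta-\lambda]\). The window has fixed length \(\beta-\alpha\) but an arbitrary position. So for fixed \(x\), sort the leaves by \(d_T(x,\cdot)\). Every realized set is then a contiguous block in this order, where ties are grouped together. There are \(O(m^2)\) such blocks, plus the empty set. The whole difficulty is therefore to control how the sorted order, and the set of realizable blocks, changes as \(x\) moves through \(T\). My plan is to partition \(T\) into \(O(m)\) pieces on which the order is essentially fixed. Each piece then contributes \(O(m^2)\) sets, which gives \(C m^3\) in total.

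\textbf{Step 1: decompose \(T\).} Since \(T\) is reduced, it has at most \(2m-3\) edges. Take a point \(x\) in the interior of an edge \(e=uv\), at distance \(s\) from \(u\). Leaves \(a\) on the \(u\)-side satisfy \(d_T(x,a)=d_T(u,a)+s\), and leaves on the \(v\)-side satisfy \(d_T(x,a)=d_T(v,a)+(w(e)-s)\). Within each side, the relative order of the distances does not depend on \(s\). Only the merge of the two sorted lists changes as \(s\) varies. The vertices of \(T\) are finitely many, at most \(2m-2\) points, and they contribute \(O(m^2)\) sets each.

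\textbf{Step 2: count per edge.} This is the main obstacle. Merging two sorted lists \(L_u\) and \(L_v\) with a moving offset can produce up to \(\Theta(m^2)\) different merged orders. Charging \(O(m^2)\) sets to each order would give \(O(m^4)\) per edge, which is too many. I would instead count the realized sets directly. For a fixed \(s\), a realized set has the form
\[
\{a\in L_u: d_T(u,a)\in J-s\}\ \cup\ \{a\in L_v: d_T(v,a)\in J-(w(e)-s)\},
\]
where \(J=[\alpha-\lambda,\beta-\lambda]\). The first part is a prefix-free interval \(P\) of \(L_u\), determined by where the window sits. The second part is an interval \(Q\) of \(L_v\), determined by the window shifted by \(2s-w(e)\). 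The parameters are only the window position and \(s\), so \((P,Q)\) is determined by a point of the plane \((\lambda,s)\).

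The plane is cut by \(O(m)\) lines. These are the lines where an endpoint of the window crosses some \(d_T(u,a)\) (lines in \(\lambda\) alone) and the lines where an endpoint crosses some \(d_T(v,a)\) shifted by \(2s\). An arrangement of \(O(m)\) lines has \(O(m^2)\) cells, so each edge contributes \(O(m^2)\) distinct sets. This also covers the cases where \(x\) is a vertex or \(s\) is at an endpoint, by taking closures or lower-dimensional faces.

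\textbf{Step 3: sum.} There are \(O(m)\) edges, each contributing \(O(m^2)\) sets, and the vertices contribute \(O(m)\cdot O(m^2)\) more. This gives \(|\mathcal F(T,I)|\le Cm^3\).

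In the write-up I would double-check one point in Step 2: that the interval's two endpoints, combined with the two families of breakpoints, really give only linearly many lines. They do, namely \(2m\) lines of the form \(\lambda=\alpha-d\) or \(\lambda=\beta-d\), and \(2m\) lines of the form \(\lambda+2s=\text{const}\). Faces of the arrangement, including edges and vertices, still number \(O(m^2)\).
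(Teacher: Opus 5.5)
Your proposal is correct and is essentially the paper's proof. On each of the at most $2m-3$ edges, the distance to each leaf is affine with slope $\pm 1$ along the edge, so each leaf's membership is controlled by two lines in the (position, $\lambda$)-plane; $O(m)$ lines give $O(m^2)$ faces per edge and $O(m^3)$ overall, and your Step~1 sorted-order discussion is unneeded. One cosmetic slip: in the original coordinates the lines are $\lambda+s=\mathrm{const}$ for $u$-side leaves and $\lambda-s=\mathrm{const}$ for $v$-side leaves, and your ``$\lambda$ alone'' and ``$\lambda+2s$'' description tacitly replaces $\lambda$ by $\lambda+s$ and flips a sign, which does not affect the count.
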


We now combine these local shell bounds under threshold aggregation.

\begin{theorem}\label{thm:threshold-set-system}
There exists an absolute constant $C>0$ such that the following holds.

Let $A$ be a finite set with $|A|=m$, and let $\mathcal S\subseteq 2^A$. If
$G(A,\mathcal S)$ is a \textsc{$(k,t)$-threshold-\textsc{PCG}}, then
\(
|\mathcal S|\le (C\,m^3)^k.
\)
\end{theorem}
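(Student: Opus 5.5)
The plan is to show that the $B$-side neighborhoods are determined by a $k$-tuple of ``shell traces'', one per constituent \textsc{PCG}, each ranging over a family of size at most $Cm^3$ by Lemma~\ref{lem:single-shell-bound}.

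Fix a representation of $G=G(A,\mathcal S)$ as a \textsc{$(k,t)$-threshold-\textsc{PCG}} with constituent \textsc{PCG}s $G_1,\dots,G_k$ on $V=A\cup B$. For each $i$, let $G_i=\textsc{PCG}(T_i,[\alpha_i,\beta_i])$ with $T_i$ reduced. Let $T_i'$ be the subtree of $T_i$ spanned by the leaves $\zeta_i(A)$, with degree-$2$ vertices suppressed; this is a reduced weighted tree with leaf set identified with $A$, $|A|=m$, and it preserves all distances between leaves of $A$ up to the obvious identification.

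For each $b\in B$, let $x_i(b)$ be the point of $T_i'$ where the path from $\zeta_i(b)$ meets $T_i'$ (the projection), and set $\lambda_i(b)=d_{T_i}(\zeta_i(b),x_i(b))$. Then for every $a\in A$ we have $d_{T_i}(\zeta_i(b),\zeta_i(a))=d_{T_i'}(x_i(b),a)+\lambda_i(b)$. Consequently
\[
N_{G_i}(b)\cap A=\{a\in A: d_{T_i'}(x_i(b),a)+\lambda_i(b)\in[\alpha_i,\beta_i]\}\in\mathcal F(T_i',[\alpha_i,\beta_i]).
\]
By Lemma~\ref{lem:single-shell-bound}, each such set comes from a family of size at most $Cm^3$. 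The only point needing care here is the projection, since $\zeta_i(b)$ is not in $T_i'$. The lemma allows $x$ to be any point of the geometric realization and $\lambda$ any real, so this is exactly the form it covers.

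Next I aggregate. Because $G$ is bipartite with all of $A$ on one side, $N_G(b)=\{a\in A: |\{i: a\in N_{G_i}(b)\}|\ge t\}$. Hence $N_G(b)$ is a function of the tuple $(N_{G_1}(b)\cap A,\dots,N_{G_k}(b)\cap A)$, and the map $b\mapsto N_G(b)$ factors through a set of at most $(Cm^3)^k$ tuples. The vertices $b_S$ have pairwise distinct neighborhoods $S$, so $|\mathcal S|=|\{N_G(b):b\in B\}|\le(Cm^3)^k$.

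The main obstacle is already absorbed into Lemma~\ref{lem:single-shell-bound}. What remains is verifying the reduction to $T_i'$: if $|A|\le 2$ the spanned subtree is degenerate, and this case can be handled trivially by enlarging $C$ (since $|\mathcal S|\le 2^m$).
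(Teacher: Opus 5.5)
Your proposal is correct and follows essentially the same route as the paper's proof. You restrict each witness tree to the subtree spanned by $A$ and project each $b_S$ onto it, so that $N_{G_i}(b_S)\cap A$ lies in $\mathcal F(T_i,I_i)$. The single-shell lemma bounds each such family by $Cm^3$, and the threshold rule makes $S$ a function of the resulting $k$-tuple, giving $(Cm^3)^k$.

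Your extra care with small $|A|$ is something the paper leaves implicit. Strictly, the fix via $|\mathcal S|\le 2^m$ covers $m=1,2$ but not $m=0$: there $\mathcal S=\{\emptyset\}$ gives $|\mathcal S|=1>(C\cdot 0^3)^k$. This degeneracy affects the statement itself, not your argument.
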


\begin{proof}
Assume that $G(A,\mathcal S)$ is a \textsc{$(k,t)$-threshold-\textsc{PCG}}. Thus there
exist \textsc{PCG}s
\(
G_1,\dots,G_k
\)
on the common vertex set $A\cup B$ such that, for every $a\in A$ and every
$S\in\mathcal S$,
\[
ab_S\in E(G(A,\mathcal S))
\iff
\sum_{i=1}^k \mathbf 1_{\{ab_S\in E(G_i)\}} \ge t.
\]

Fix $i\in[k]$. Since $G_i$ is a \textsc{PCG}, there exist a weighted tree
$\widetilde T_i$, an interval $I_i=[\alpha_i,\beta_i]$, and a leaf-labeling bijection
such that
\(
xy\in E(G_i)
\iff
d_{\widetilde T_i}(x,y)\in I_i
\)
for all distinct leaves corresponding to vertices of $A\cup B$.

Let $T_i$ be the minimal subtree of $\widetilde T_i$ spanning the leaves corresponding
to $A$, and suppress all degree-$2$ vertices. Then $T_i$ is a reduced weighted tree with
leaf set exactly $A$.

For each $S\in\mathcal S$, let $x_{i,S}$ be the unique closest point of $T_i$ to the
leaf corresponding to $b_S$, and let
\(
\lambda_{i,S}:=d_{\widetilde T_i}(b_S,x_{i,S}).
\)
Then for every $a\in A$,
\(
d_{\widetilde T_i}(b_S,a)=\lambda_{i,S}+d_{T_i}(x_{i,S},a),
\)
and hence
\(
ab_S\in E(G_i)
\iff
d_{T_i}(x_{i,S},a)+\lambda_{i,S}\in I_i.
\)

Define
\(
\mathcal F_i:=
\Bigl\{
\{a\in A:d_{T_i}(x,a)+\lambda\in I_i\}: x\in T_i,\ \lambda\in\mathbb R
\Bigr\}.
\)
By Lemma~\ref{lem:single-shell-bound},
\(
|\mathcal F_i|\le C\,m^3
\ \text{for each }i\in[k].
\)

Now fix $S\in\mathcal S$, and let
\(
X_{i,S}:=\{a\in A:ab_S\in E(G_i)\}.
\)
Then $X_{i,S}\in\mathcal F_i$ for each $i\in[k]$, and by the threshold rule,
\(
S=
\Bigl\{
a\in A:
\sum_{i=1}^k \mathbf 1_{\{a\in X_{i,S}\}} \ge t
\Bigr\}.
\)
Therefore, each set $S\in\mathcal S$ is determined by an $k$-tuple
\(
(X_{1,S},\dots,X_{k,S})\in \mathcal F_1\times\cdots\times\mathcal F_k.
\)
Hence,
\[
|\mathcal S|
\le
|\mathcal F_1|\cdots |\mathcal F_k|
\le
(C\,m^3)^k.
\] \end{proof}

Applying this to the complete $q$-uniform set system yields the main explicit lower
bound.

\begin{corollary}\label{cor:threshold-incidence}
There exists an absolute constant $C>0$ such that, if $I(p,q)$ is a
\textsc{$(k,t)$-threshold-\textsc{PCG}}, then
\(
\binom{p}{q}\le (C\,p^3)^k.
\)
Consequently,
\(
k\ge \frac{\log \binom{p}{q}}{\log C + 3\log p}.
\)
\end{corollary}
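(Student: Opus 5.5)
The plan is to specialize Theorem~\ref{thm:threshold-set-system} to the complete $q$-uniform set system and then take logarithms. By definition, $I(p,q)=G\!\left(A,\binom{A}{q}\right)$ with $|A|=p$. So if $I(p,q)$ is a \textsc{$(k,t)$-threshold-\textsc{PCG}}, Theorem~\ref{thm:threshold-set-system} applies with $m=p$ and $\mathcal S=\binom{A}{q}$. Since $|\mathcal S|=\binom{p}{q}$, this gives $\binom{p}{q}\le (C\,p^3)^k$, with the same absolute constant $C$ as in that theorem.

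For the second inequality, take logarithms of both sides to get $\log\binom{p}{q}\le k\,(\log C+3\log p)$. To divide by $\log C+3\log p$ we need it to be positive. We may assume without loss of generality that $C\ge 1$: enlarging $C$ only weakens the bound in Theorem~\ref{thm:threshold-set-system}, so it stays valid. Then $\log C+3\log p>0$ whenever $p\ge 2$. The degenerate case $p=1$ (with $C=1$) only occurs when $\binom{p}{q}=1$, and there the lower bound on $k$ is vacuous. Dividing through then yields $k\ge \log\binom{p}{q}/(\log C+3\log p)$.

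I expect no real obstacle, since all the substantive work sits in Lemma~\ref{lem:single-shell-bound} and Theorem~\ref{thm:threshold-set-system}. The only point needing care is fixing the sign of the denominator, which the normalization $C\ge 1$ handles.
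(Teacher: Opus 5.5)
Your proposal is correct and follows the paper's proof exactly: apply Theorem~\ref{thm:threshold-set-system} with $m=p$ and $\mathcal S=\binom{A}{q}$, then take logarithms. Your care about the sign of the denominator is a detail the paper skips; normalizing to $C\ge 2$ instead of $C\ge 1$ makes $\log C+3\log p>0$ for every $p\ge 1$, which is cleaner than calling the $p=1$ case vacuous, since there the quotient is actually $0/0$.
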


\begin{proof}
Apply Theorem~\ref{thm:threshold-set-system} with
\(
m=p
\ \text{and}\ 
\mathcal S=\binom{A}{q}.
\)
Then
\(
|\mathcal S|=\binom{p}{q},
\)
which gives the first inequality; the second follows by taking logarithms.
\end{proof}

The case $k=2$ is already enough to produce explicit low-level obstructions.

\begin{corollary}\label{cor:explicit-2or}
For every sufficiently large $p$, the graph $I(p,7)$ is neither a
\textsc{$2$-\textsc{OR}-\textsc{PCG}} nor a \textsc{$2$-\textsc{AND}-\textsc{PCG}}.
More generally, for every sufficiently large $p$, the graph $I(p,7)$ is not a
\textsc{$(2,t)$-threshold-\textsc{PCG}} for either $t=1$ or $t=2$.
\end{corollary}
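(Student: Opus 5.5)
The plan is to apply Corollary~\ref{cor:threshold-incidence} with $k=2$ and $q=7$, and then compare growth rates in $p$. Suppose, for contradiction, that $I(p,7)$ is a $(2,t)$-threshold-\textsc{PCG} for some $t\in\{1,2\}$. By Proposition~\ref{prop:special-cases}, the case $t=1$ is exactly \textsc{$2$-\textsc{OR}-\textsc{PCG}} and the case $t=2$ is exactly \textsc{$2$-\textsc{AND}-\textsc{PCG}}. It therefore suffices to rule out every $(2,t)$-threshold representation; both claims of the corollary then follow together.

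In either case Corollary~\ref{cor:threshold-incidence}, with the absolute constant $C$ taken from Theorem~\ref{thm:threshold-set-system}, yields
\[
\binom{p}{7}\le (C\,p^3)^2=C^2p^6.
\]
The left-hand side is a polynomial in $p$ of degree $7$ with leading coefficient $1/7!$. Concretely, for $p\ge 14$ we have $p-i\ge p/2$ for all $0\le i\le 6$, so
\[
\binom{p}{7}\ \ge\ \frac{(p/2)^7}{7!}\ =\ \frac{p^7}{2^7\cdot 7!}.
\]
The inequality therefore forces $p\le 2^7\cdot 7!\cdot C^2$. Setting $p_0:=\max\{14,\ \lfloor 2^7\cdot 7!\cdot C^2\rfloor+1\}$, every $p\ge p_0$ violates the bound. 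Hence $I(p,7)$ is neither a $(2,1)$- nor a $(2,2)$-threshold-\textsc{PCG}, and so neither a \textsc{$2$-\textsc{OR}-\textsc{PCG}} nor a \textsc{$2$-\textsc{AND}-\textsc{PCG}}.

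The substantive work is already contained in Lemma~\ref{lem:single-shell-bound} and Theorem~\ref{thm:threshold-set-system}, which we take as given. The only point needing care here is the choice $q=7$. It is the smallest $q$ for which $\binom{p}{q}$, of order $p^q$, eventually exceeds $(Cp^3)^2$, of order $p^6$. For that reason the threshold $p_0$ depends only on the absolute constant $C$, and we make no attempt to make it explicit or small.
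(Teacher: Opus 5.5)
Your proposal is correct and follows the paper's proof. Both apply Corollary~\ref{cor:threshold-incidence} with $k=2$ and $q=7$, and both note that $\binom{p}{7}$ grows like $p^7$ while the bound $(Cp^3)^2$ grows only like $p^6$. The paper phrases this as the ratio $\log\binom{p}{7}/(\log C+3\log p)\to 7/3>2$, and your explicit estimate $\binom{p}{7}\ge p^7/(2^7\cdot 7!)$ for $p\ge 14$ just makes the threshold $p_0$ concrete in terms of $C$.
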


\begin{proof}
If $I(p,7)$ were a \textsc{$(2,t)$-threshold-\textsc{PCG}}, then by
Corollary~\ref{cor:threshold-incidence},
\(
2\ge \frac{\log \binom{p}{7}}{\log C + 3\log p}.
\)
But
\(
\log \binom{p}{7}=7\log p + O(1),
\)
so the right-hand side tends to $7/3>2$, a contradiction for sufficiently large $p$.
The cases $t=1$ and $t=2$ are precisely \textsc{$2$-\textsc{OR}-\textsc{PCG}} and
\textsc{$2$-\textsc{AND}-\textsc{PCG}}, respectively.
\end{proof}

Corollary~\ref{cor:explicit-2or} should be compared with Problem~5 of
\cite{Calamoneri2021OnGOI}, which asks for the smallest graph that is not a
\textsc{$2$-\textsc{OR}-\textsc{PCG}} (and similarly for
\textsc{$2$-\textsc{AND}-\textsc{PCG}} and \textsc{$2$-interval-\textsc{PCG}}). Our
result does not optimize the order of such a graph, but it does provide an explicit
infinite family of non-\textsc{$2$-\textsc{OR}-\textsc{PCG}} and
non-\textsc{$2$-\textsc{AND}-\textsc{PCG}} examples, in contrast with the purely
existential Ramsey-type argument of \cite{Calamoneri2021OnGOI}.

We next turn to the family
\(
H_y:=I(4y-3,\,2y-1),
\qquad y\ge 1.
\)

\begin{corollary}\label{cor:Hk-threshold}
For every integer \(y\ge 1\), any \textsc{$(k,t)$-threshold-\textsc{PCG}}
representation of \(H_y\) satisfies
\(
k=\Omega\!\left(\frac{y}{\log y}\right).
\)
\end{corollary}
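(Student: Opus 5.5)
The plan is to apply Corollary~\ref{cor:threshold-incidence} directly with \(p=4y-3\) and \(q=2y-1\). Any \textsc{$(k,t)$-threshold-\textsc{PCG}} representation of \(H_y=I(4y-3,2y-1)\) then forces
\[
k\ \ge\ \frac{\log\binom{4y-3}{2y-1}}{\log C+3\log(4y-3)} .
\]
This reduces the claim to two elementary estimates: a linear lower bound on the numerator and a logarithmic upper bound on the denominator.

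For the numerator, I will use the standard central-binomial estimate. Note that \(2y-1\) is essentially half of \(4y-3\); more precisely, \(\binom{4y-3}{2y-1}=\binom{4y-3}{2y-2}\), which is the central coefficient of an odd row. Hence
\[
\binom{4y-3}{2y-1}\ \ge\ \frac{2^{4y-3}}{4y-2},
\]
because the central coefficient is the largest of the \(4y-2\) terms summing to \(2^{4y-3}\). Consequently \(\log\binom{4y-3}{2y-1}\ge (4y-3)\log 2-\log(4y-2)=\Omega(y)\) for all sufficiently large \(y\).

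For the denominator, \(\log C+3\log(4y-3)=O(\log y)\) once \(y\ge 2\). Dividing the two estimates gives \(k=\Omega(y/\log y)\) for all large \(y\).

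The finitely many small values of \(y\) (including \(y=1\), where \(\log y=0\)) need separate treatment. There I will read the asymptotic notation as holding for \(y\ge y_0\), or equivalently absorb these cases into the constant, using the trivial bound \(k\ge 1\). I will state this convention explicitly, since \(y/\log y\) is undefined or degenerate at \(y=1\).

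There is no genuine obstacle here. The only point needing care is making the binomial lower bound explicit rather than citing Stirling loosely, and handling the degenerate small-\(y\) cases so that the \(\Omega\) statement is meaningful for every \(y\ge1\).
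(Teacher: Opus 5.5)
Your proposal is correct and follows the paper's proof: you apply Corollary~\ref{cor:threshold-incidence} with $p=4y-3$, $q=2y-1$, and compare the $\Theta(y)$ numerator $\log\binom{p}{q}$ with the $\Theta(\log y)$ denominator. You are slightly more careful than the paper, which only asserts $\log\binom{p}{q}=\Theta(p)$: you prove the lower bound explicitly via $\binom{4y-3}{2y-1}\ge 2^{4y-3}/(4y-2)$, and you handle the degenerate small-$y$ cases separately.
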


\begin{proof}
Suppose that \(H_y\) is a \textsc{$(k,t)$-threshold-\textsc{PCG}}. Applying
Corollary~\ref{cor:threshold-incidence} with
\(
p=4y-3
\ \text{and}\ 
q=2y-1,
\)
we obtain
\(
k\ge \frac{\log \binom{p}{q}}{\log C + 3\log p}.
\)
Since
\(
q=\frac12 p+O(1),
\)
we have
\(
\log \binom{p}{q}=\Theta(p)=\Theta(y),
\)
whereas
\(
\log C + 3\log p=\Theta(\log p)=\Theta(\log y).
\)
Therefore,
\(
k\ge \frac{\log \binom{p}{q}}{\log C + 3\log p}
  = \Omega\!\left(\frac{y}{\log y}\right),
\)
as claimed.
\end{proof}

For comparison, the same shell-counting method yields a much stronger lower bound in the
multi-interval model.

\begin{proposition}\label{prop:interval-set-system}
There exists an absolute constant \(C>0\) such that the following holds.

Let \(A\) be a finite set with \(|A|=m\), and let \(\mathcal S\subseteq 2^A\). If
\(G(A,\mathcal S)\) is an \textsc{$\ell$-interval-\textsc{PCG}}, then
\(
|\mathcal S|\le C\,m^3\ell^2.
\)
\end{proposition}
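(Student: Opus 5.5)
The plan is to mirror the proof of Theorem~\ref{thm:threshold-set-system}, but with a single witness tree and $\ell$ intervals in place of $k$ trees with one interval each. Assume $G(A,\mathcal S)=\ell\textsc{-interval-PCG}(\widetilde T, I_1\cup\dots\cup I_\ell)$. We may take the intervals $I_1,\dots,I_\ell$ pairwise disjoint and ordered, $I_j=[\alpha_j,\beta_j]$ with $\beta_j<\alpha_{j+1}$. Let $T$ be the minimal subtree of $\widetilde T$ spanning the leaves of $A$, with degree-$2$ vertices suppressed. For each $S\in\mathcal S$, let $x_S$ be the closest point of $T$ to $b_S$ and $\lambda_S$ the distance from $b_S$ to $x_S$. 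Exactly as before,
\[
S=\{a\in A: d_T(x_S,a)+\lambda_S\in I_1\cup\dots\cup I_\ell\}.
\]
So it suffices to bound the number of distinct sets of the form $\{a: d_T(x,a)+\lambda\in U\}$, where $x\in T$, $\lambda\in\mathbb R$ and $U=\bigcup_j I_j$ is fixed.

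The crude estimate, taking the union over $j$ of single shells from Lemma~\ref{lem:single-shell-bound}, gives $(Cm^3)^\ell$, which is far too weak. I will instead count directly in the style of the proof of Lemma~\ref{lem:single-shell-bound}.

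First fix $x$ and let $\lambda$ vary. The set $\{a: d_T(x,a)+\lambda\in U\}$ changes only when some value $d_T(x,a)+\lambda$ crosses an endpoint $\alpha_j$ or $\beta_j$. This happens at $\lambda\in\{\alpha_j-d_T(x,a),\ \beta_j-d_T(x,a)\}$, giving at most $2m\ell$ critical values of $\lambda$ and hence $O(m\ell)$ sets for fixed $x$.

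Next let $x$ move. Subdivide $T$, which has $O(m)$ edges, into $O(m^2)$ segments on which the order of the distances $d_T(x,a)$ is fixed; the breakpoints are the points where $d_T(x,a)=d_T(x,a')$, and each pair $(a,a')$ contributes $O(1)$ such points per edge, or I will argue more carefully that $O(m^2)$ suffices overall. On each such segment, parametrize $x$ by $s$. Every $d_T(x,a)$ is then $c_a\pm s$, and the membership pattern is determined by the cell of the $(s,\lambda)$-plane cut by the $O(m\ell)$ lines $\lambda=\gamma-c_a\mp s$ with $\gamma$ an endpoint.

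Naively, an arrangement of $O(m\ell)$ lines has $O(m^2\ell^2)$ cells, giving $O(m^4\ell^2)$ in total. To reach $m^3\ell^2$, I will use the fact that on a single edge only two slopes occur, $+1$ and $-1$, according to which side of the edge $a$ lies on. Lines of equal slope are parallel, so the only crossings are between a $+$-line and a $-$-line. That still leaves $O(m^2\ell^2)$ crossings per edge, and with the $O(m)$ edges of $T$ this gives $O(m^3\ell^2)$ cells in total, with no need for the finer ordering subdivision at all. Concretely: for each edge $e$ of $T$, the arrangement in the $(s,\lambda)$ strip consists of at most $2m\ell$ lines of two slopes, so it has at most $O((m\ell)^2)$ faces. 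Summing over $2m-3$ edges, and adding the vertices of $T$ (each a degenerate case already covered by the adjacent edges), gives $|\mathcal S|\le C m^3\ell^2$.

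The main obstacle is making this cell count rigorous. The claim to verify is that the represented set is constant on each face of the arrangement in the $(s,\lambda)$ strip, with boundaries handled because the intervals are closed and lower-dimensional faces are also counted. The count of faces, vertices and edges of a planar arrangement of $N$ lines is $O(N^2)$, so this is routine once the linear parametrization on each edge is established.
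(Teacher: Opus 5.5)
Your proposal is correct and is essentially the paper's argument. You reduce to the spanning subtree $T$ of the $A$-leaves, with attachment points $x_S$ and offsets $\lambda_S$. On each of the $O(m)$ edges you then bound by $O(m^2\ell^2)$ the number of faces of the arrangement of the $2m\ell$ endpoint lines in the $(s,\lambda)$-strip, and you sum over the edges.

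The ordering subdivision and the two-slope observation are unnecessary detours, since the generic $O(N^2)$ face bound for $N$ lines already suffices on each edge. Your explicit counting of lower-dimensional faces is extra care that the paper's sketch glosses over.
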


\begin{proof}
Deferred to Appendix~\ref{app:ssb}.
\end{proof}


\begin{corollary}\label{cor:Hk-interval}
If
\(
H_y=I(4y-3,\,2y-1)
\)
is an \textsc{$\ell$-interval-\textsc{PCG}}, then
\(
\ell=\Omega\!\left(\frac{4^y}{y^{7/4}}\right).
\)
In particular, \(H_y\notin y\)-interval-\textsc{PCG} for all sufficiently large \(y\).
\end{corollary}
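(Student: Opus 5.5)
We apply Proposition~\ref{prop:interval-set-system} directly to $H_y=I(4y-3,2y-1)$, with $A$ of size $m=p=4y-3$ and $\mathcal S=\binom{A}{2y-1}$. If $H_y$ is an \textsc{$\ell$-interval-\textsc{PCG}}, the proposition gives
\[
\binom{4y-3}{2y-1}\le C\,(4y-3)^3\,\ell^2 ,
\qquad\text{so}\qquad
\ell\ \ge\ \left(\frac{\binom{4y-3}{2y-1}}{C\,(4y-3)^3}\right)^{1/2}.
\]
The rest is a lower estimate of the binomial coefficient.

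For that estimate we use the central binomial asymptotics $\binom{2n}{n}=\Theta(4^n/\sqrt n)$. Since $2y-1$ is within $O(1)$ of $(4y-3)/2$, the ratio of $\binom{4y-3}{2y-1}$ to the central coefficient $\binom{4y-4}{2y-2}$ is bounded. Concretely, $\binom{4y-3}{2y-1}=\frac{4y-3}{2y-1}\binom{4y-4}{2y-2}$, and the prefactor lies in $[1,2]$. Hence
\[
\binom{4y-3}{2y-1}=\Theta\!\left(\frac{16^{\,y}}{\sqrt y}\right).
\]
Here $4^{2y-2}=16^y/256$, and $\sqrt{2y-2}=\Theta(\sqrt y)$ for $y\ge2$; the case $y=1$ is trivial.

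Substituting into the inequality gives
\[
\ell^2\ \ge\ \frac{\Theta(16^y/\sqrt y)}{\Theta(y^3)}=\Theta\!\left(\frac{16^y}{y^{7/2}}\right),
\]
so $\ell=\Omega(4^y/y^{7/4})$, which is the claimed bound with exactly the stated exponent. For the ``in particular'' part, note that $4^y/y^{7/4}$ grows faster than $y$. So for all sufficiently large $y$ the required $\ell$ exceeds $y$, and $H_y$ cannot be a \textsc{$y$-interval-\textsc{PCG}}.

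None of these steps is a serious obstacle once Proposition~\ref{prop:interval-set-system} is available, since all the geometric difficulty sits in that proposition. The only care needed is to track the constant factors in the binomial estimate uniformly in $y$, which is why we pass through the exact identity with the central binomial coefficient rather than a loose Stirling approximation.
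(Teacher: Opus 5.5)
Your proposal is correct and follows the paper's proof: apply Proposition~\ref{prop:interval-set-system} with $m=4y-3$ and $|\mathcal S|=\binom{4y-3}{2y-1}$, then use $\binom{4y-3}{2y-1}=\Theta(16^y/\sqrt{y})$ to get $\ell=\Omega(4^y/y^{7/4})$. The paper simply cites that binomial estimate, while your identity $\binom{4y-3}{2y-1}=\frac{4y-3}{2y-1}\binom{4y-4}{2y-2}$ derives it from the central binomial coefficient.
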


\begin{proof}
Apply Proposition~\ref{prop:interval-set-system} with
\(
m=4y-3
\ \text{and}\ 
|\mathcal S|=\binom{4y-3}{2y-1}.
\)
We obtain
\(
\binom{4y-3}{2y-1}\le C\,(4y-3)^3\ell^2,
\ \text{and hence}\ 
\ell\ge
\sqrt{\frac{\binom{4y-3}{2y-1}}{C(4y-3)^3}}.
\)
Using the central binomial estimate
\(
\binom{4y-3}{2y-1}=\Theta\!\left(\frac{16^y}{\sqrt{y}}\right)
\) (see, e.g.\cite{Graham1994Concrete}), 
it follows that
\(
\ell=\Omega\!\left(\frac{4^y}{y^{7/4}}\right).
\)
Since \(\frac{4^y}{y^{7/4}}\) eventually exceeds \(y\), the final claim follows.
\end{proof}

The family \(H_y\) already appears in \cite{Calamoneri2021OnGOI} as a hard family for
multi-interval representations: that paper proves that
\(
H_y\notin (y-1)\text{-interval-}\textsc{PCG}
\)
and leaves open the problem of determining the minimum integer \(\ell\ge y\) such that
\(
H_y\in \ell\text{-interval-}\textsc{PCG}.
\)
Corollary~\ref{cor:Hk-interval} gives a much stronger lower bound on the interval side,
while Corollary~\ref{cor:Hk-threshold} shows that the same family remains difficult even
under the more flexible threshold model. Thus \(H_y\) serves as an explicit obstruction
family simultaneously for both multi-interval and threshold-based generalizations of
\textsc{PCG}.

\begin{remark}
The threshold parameter $t$ does not affect the bound in
Theorem~\ref{thm:threshold-set-system}. Once the $k$ constituent \textsc{PCG}s are fixed,
each neighborhood on the $B$-side is determined by an $k$-tuple of shell sets, one from
each constituent representation. The threshold rule only specifies how these $k$ sets
are combined pointwise, and therefore cannot increase the total number of possible
neighborhoods beyond the number of available $k$-tuples.
\end{remark}

\section{The \textsc{$k$-\textsc{AND}-\textsc{PCG}} Hierarchy}\label{sec:and}

We now specialize to the conjunction setting. We show that the hierarchy of
\textsc{$k$-\textsc{AND}-\textsc{PCG}} classes is strict, and that none of these classes
is closed under complement. These two questions were posed as open problems in \cite{Calamoneri2021OnGOI, survey}.

\begin{theorem}\label{thm:and-hierarchy}
For every integer $k \ge 1$,
\(
k\textsc{-AND-\textsc{PCG}} \subsetneq (k+1)\textsc{-AND-\textsc{PCG}}.
\)
\end{theorem}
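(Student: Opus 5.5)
Inclusion is the easy half. Given $G=G_1\cap\cdots\cap G_k$ with each $G_i$ a \textsc{PCG}, append $G_{k+1}=K_n$, which is trivially a \textsc{PCG}. The intersection is unchanged, so $k$-\textsc{AND}-\textsc{PCG} $\subseteq (k+1)$-\textsc{AND}-\textsc{PCG}.

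For strictness, I would use the incidence-graph machinery of Section~\ref{sec:incidence}. The goal is a graph whose number of $B$-side neighbourhoods exceeds the bound of Theorem~\ref{thm:threshold-set-system} for $k$ predicates, yet which is an intersection of $k+1$ \textsc{PCG}s. Take $A=[m]$ and a set system $\mathcal S$ of size larger than $(Cm^3)^k$; Theorem~\ref{thm:threshold-set-system} with $t=k$ then rules out $k$-\textsc{AND}-\textsc{PCG}. The construction must therefore let each of the $k+1$ predicates contribute roughly $m^{3}$ (or at least $m^{c}$ with $c(k+1)>3k$) independent choices, combined multiplicatively by intersection. Counting alone cannot produce this, since it only gives upper bounds.

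\textbf{Construction.} A natural \textsc{PCG} family to combine is the following. For a partition of $A$ into blocks, take a star tree: the $A$-leaves sit at various depths and each $b_S$ hangs off the centre at a chosen depth, with interval $[\alpha,\beta]$. Then $N(b_S)$ is a ``threshold set'' $\{a: w_a\in[\alpha-\lambda_S,\beta-\lambda_S]\}$, that is, a window in the linear order of the $w_a$. This gives $\Theta(m^2)$ possible neighbourhoods per predicate, one per interval of the order. A single \textsc{PCG} must also handle the $A$--$A$ and $B$--$B$ pairs. I would force these to be nonedges in at least one predicate. In one dedicated predicate, make all $A$--$A$ distances and all $B$--$B$ distances fall outside the interval, while $A$--$B$ distances fall inside; a long central edge separating the $A$ part from the $B$ part achieves this. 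This is a complete bipartite \textsc{PCG}, which is standard. In the window predicates, the $B$--$B$ distances $\lambda_S+\lambda_{S'}$ are unconstrained, which is fine since they are killed by the dedicated predicate. Intersecting $k$ window predicates, each using a different ordering of $A$, with the one bipartite predicate gives $k+1$ \textsc{PCG}s. The realizable $B$-neighbourhoods are then intersections of $k$ windows in $k$ orders, which can be arranged to be all sets of the form $W_1\cap\cdots\cap W_k$.

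\textbf{Counting the intersections.} It remains to show that there are more than $(Cm^3)^k$ distinct sets of the form $W_1\cap\cdots\cap W_k$. This is the main obstacle: $m^{2k}$ is less than $m^{3k}$, so windows alone are too weak. I would fix this in two ways.
\begin{itemize}
\item Use a richer per-predicate family, namely shells in caterpillar trees. The single-shell bound is tight at $\Theta(m^3)$, so there should be a tree realizing $\Omega(m^3)$ shells.
\item Use combinatorial independence: choose the trees on disjoint coordinate blocks of $A$ (say $A=A_1\sqcup\cdots\sqcup A_{k}$), so the intersections factor as products.
\end{itemize}
Even so, the constants $C$ mismatch. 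The cleanest fix is to compare $k$ against $k+1$ exactly: put $A$ as a disjoint union of $k+1$ blocks of size $m'$, with predicate $j$ acting as a window or shell predicate on block $j$ and as ``accept everything'' elsewhere. The per-block choices then multiply to $\Omega(m'^{c})^{k+1}$ sets. Against the upper bound $(C(km')^3)^k$, for fixed $k$ and large $m'$ we need $c(k+1)>3k$, which holds if each block contributes $\Omega(m'^3)$ sets.

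If realizing $m^3$ shells per predicate proves delicate, the fallback is to sharpen Lemma~\ref{lem:single-shell-bound} for the specific shape used, or to look for a non-incidence witness. Verifying a matching $\Omega(m^3)$ lower bound for a single predicate, while keeping the $A$--$A$ pairs controlled, is the step I expect to be hardest.
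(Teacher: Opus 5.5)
Your inclusion argument is fine, but the strictness half has a genuine gap. The construction you actually write down cannot work even in principle. In it, one of the $k+1$ predicates is the complete bipartite graph between $A$ and $B$, so only $k$ predicates shape the $B$-side neighbourhoods: $N(b_S)=X_{1,S}\cap\cdots\cap X_{k,S}$ with $X_{j,S}\in\mathcal F(T_j,I_j)$. Lemma~\ref{lem:single-shell-bound}, with the same absolute constant $C$, then caps the number of distinct neighbourhoods at $(Cm^3)^k$. That is exactly the bound Theorem~\ref{thm:threshold-set-system} gives for $k$-\textsc{AND}-\textsc{PCG}s, so no choice of trees can push $|\mathcal S|$ past it. (The bipartite predicate is in fact unnecessary: the proof of Theorem~\ref{thm:threshold-set-system} only inspects $A$--$B$ pairs, so $A$--$A$ and $B$--$B$ adjacency may be left to whatever the predicates produce.)

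Once all $k+1$ predicates shape neighbourhoods, as in your block fix, everything rests on the step you leave open. Each predicate must realise $\Omega(m'^3)$ distinct shells on its block while accepting all of $A\setminus A_j$, i.e.\ Lemma~\ref{lem:single-shell-bound} must be tight under this side constraint. Nothing in the proposal establishes that, and the families you do exhibit give far fewer sets:
\begin{itemize}
\item Windows in a linear order, or balls in a tree, yield only $O(m^2)$ sets. Balls in a tree metric have VC-dimension at most $2$: among three leaves, every ball containing the two farther from their median also contains the third.
\item Your star layout yields only $O(m)$ sets, since every $b_S$ hangs at the centre and the window width $\beta-\alpha$ is fixed.
\end{itemize}
With exponent $c=2$, the requirement $c(k+1)>3k$ holds only for $k=1$, and any fixed $c<3$ fails for all large $k$.

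The missing idea is to replace counting, which cannot see the extra predicate unless the single-shell bound is essentially tight, by a structural obstruction. The paper takes $G_k=\overline{F_k}$, where $F_k$ is the disjoint union of $k+1$ copies of $K_{2k,2k}$.
\begin{itemize}
\item Each $Q_t=K_n-E(K^{(t)}_{2k,2k})$ is a \textsc{PCG}: use a tree with three branches of weights $10,10,1$, unit pendant edges, and interval $[2,13]$. Then $G_k=\bigcap_t Q_t$.
\item Conversely, suppose $G_k=P_1\cap\cdots\cap P_k$. Colour each edge of $F_k$ by an index $i$ such that the edge is absent from $P_i$. Since $4k^2>k(4k-1)$, in every copy some colour class contains a cycle, so by pigeonhole one colour $i$ has cycles in two different copies.
\item Because $\overline{P_i}\subseteq F_k$, shortest such cycles are vertex-disjoint chordless cycles of length at least $4$, with no edges of $\overline{P_i}$ between them. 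This contradicts the necessary condition of Hossain et al.\ for \textsc{PCG}s.
\end{itemize}
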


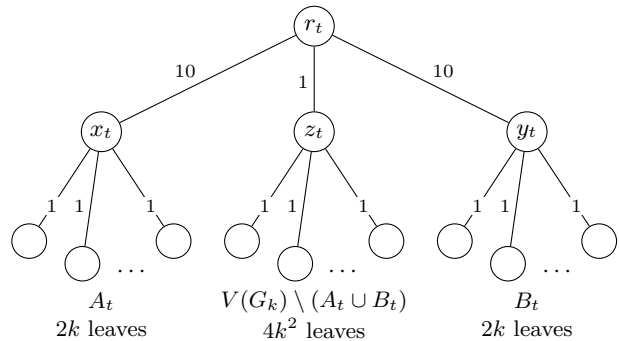
\begin{figure}[b]
\centering
\begin{tikzpicture}[
    every node/.style={font=\small},
    v/.style={circle,draw,inner sep=1.1pt,minimum size=15pt},
    leaf/.style={circle,draw,inner sep=0.9pt,minimum size=13pt},
    lbl/.style={font=\footnotesize},
    edge label/.style={font=\scriptsize,inner sep=1pt,fill=white}
]

\node[v] (r) at (0,2.2) {$r_t$};
\node[v] (x) at (-2.8,0.8) {$x_t$};
\node[v] (z) at ( 0.0,0.8) {$z_t$};
\node[v] (y) at ( 2.8,0.8) {$y_t$};

\draw (r) -- node[pos=.55,above left=1pt and 0pt,edge label] {$10$} (x);
\draw (r) -- node[pos=.55,left=1pt,edge label] {$1$} (z);
\draw (r) -- node[pos=.55,above right=1pt and 0pt,edge label] {$10$} (y);

\node[leaf] (a1) at (-3.75,-0.65) {};
\node[leaf] (a2) at (-3.05,-0.95) {};
\node[leaf] (a3) at (-1.85,-0.65) {};

\draw (x) -- node[pos=.58,below left=1pt and 0pt,edge label] {$1$} (a1);
\draw (x) -- node[pos=.57,left=1pt,edge label] {$1$} (a2);
\draw (x) -- node[pos=.58,below right=1pt and 0pt,edge label] {$1$} (a3);

\node[fill=white,inner sep=1pt] at (-2.38,-1.05) {$\cdots$};

\node[lbl] at (-2.8,-1.45) {$A_t$};
\node[lbl] at (-2.8,-1.80) {$2k$ leaves};

\node[leaf] (c1) at (-0.95,-0.65) {};
\node[leaf] (c2) at (-0.25,-0.95) {};
\node[leaf] (c3) at ( 0.95,-0.65) {};

\draw (z) -- node[pos=.58,below left=1pt and 0pt,edge label] {$1$} (c1);
\draw (z) -- node[pos=.57,left=1pt,edge label] {$1$} (c2);
\draw (z) -- node[pos=.58,below right=1pt and 0pt,edge label] {$1$} (c3);

\node[fill=white,inner sep=1pt] at (0.40,-1.05) {$\cdots$};

\node[lbl] at (0.0,-1.45) {$V(G_k)\setminus(A_t\cup B_t)$};
\node[lbl] at (0.0,-1.80) {$4k^2$ leaves};

\node[leaf] (b1) at (1.85,-0.65) {};
\node[leaf] (b2) at (2.55,-0.95) {};
\node[leaf] (b3) at (3.75,-0.65) {};

\draw (y) -- node[pos=.58,below left=1pt and 0pt,edge label] {$1$} (b1);
\draw (y) -- node[pos=.57,left=1pt,edge label] {$1$} (b2);
\draw (y) -- node[pos=.58,below right=1pt and 0pt,edge label] {$1$} (b3);

\node[fill=white,inner sep=1pt] at (3.22,-1.05) {$\cdots$};

\node[lbl] at (2.8,-1.45) {$B_t$};
\node[lbl] at (2.8,-1.80) {$2k$ leaves};

\end{tikzpicture}
\caption{The weighted tree \(T_t\) realizing \(Q_t=K_n-E(K^{(t)}_{2k,2k})\). Leaves attached to \(x_t\) correspond to \(A_t\), leaves attached to \(y_t\) correspond to \(B_t\), and all remaining vertices are attached to \(z_t\).}
\label{fig:Qt-tree}
\end{figure}

\begin{proof}
The inclusion
\(
k\textsc{-AND-\textsc{PCG}} \subseteq (k+1)\textsc{-AND-\textsc{PCG}}
\)
is immediate: if
\(
G=\bigcap_{i=1}^k G_i
\)
with each $G_i$ a \textsc{PCG}, then
\(
G=\left(\bigcap_{i=1}^k G_i\right)\cap K_{n},
\)
and the complete graph $K_{n}$ is itself a \textsc{PCG}. It remains to show that the inclusion is proper.

For fixed $k\ge 1$, let
\(
F_k:=\bigsqcup_{t=1}^{k+1} K^{(t)}_{2k,2k},
\)
the disjoint union of $k+1$ copies of $K_{2k,2k}$, and let
\(
G_k:=\overline{F_k}.
\)
We prove that
\(
G_k\in (k+1)\textsc{-AND-\textsc{PCG}}\) and \(G_k\notin k\textsc{-AND-\textsc{PCG}}.
\)

\medskip
\noindent\textbf{Step 1: \(G_k\in (k+1)\)-AND-PCG.}

For each $t\in\{1,\dots,k+1\}$, let $A_t,B_t$ be the two partite sets of the $t$-th copy
\(K^{(t)}_{2k,2k}\), so that \(|A_t|=|B_t|=2k\). Define
\(
Q_t:=K_n-E\!\left(K^{(t)}_{2k,2k}\right),
\)
where \(n=|V(G_k)|=4k(k+1)\). Then
\(
G_k=\bigcap_{t=1}^{k+1}Q_t.
\)
Thus it suffices to show that each \(Q_t\) is a \textsc{PCG}.

Fix \(t\). Construct a weighted tree \(T_t\) as follows:
\begin{itemize}
    \item start with a central vertex \(r_t\);
    \item attach three internal vertices \(x_t,y_t,z_t\) to \(r_t\) with edge weights
    \(
    w(r_tx_t)=10,\quad   w(r_ty_t)=10,\quad w(r_tz_t)=1;
    \)
    \item attach each vertex of \(A_t\) as a leaf adjacent to \(x_t\) by an edge of weight \(1\);
    \item attach each vertex of \(B_t\) as a leaf adjacent to \(y_t\) by an edge of weight \(1\);
    \item attach every remaining vertex as a leaf adjacent to \(z_t\) by an edge of weight \(1\).
\end{itemize}

The leaf-to-leaf distances in \(T_t\) are:
\[
d_{T_t}(u,v)=
\begin{cases}
2, & \text{if } \{u,v\}\subseteq A_t,\ \text{or } \{u,v\}\subseteq B_t,\ \text{or } \{u,v\}\subseteq V(G_k)\setminus (A_t\cup B_t),\\[4pt]
13, & \text{if } u\in A_t,\ v\in V(G_k)\setminus (A_t\cup B_t),\ \text{or } v\in A_t,\ u\in V(G_k)\setminus (A_t\cup B_t),\\
& \text{or } u\in B_t,\ v\in V(G_k)\setminus (A_t\cup B_t),\ \text{or } v\in B_t,\ u\in V(G_k)\setminus (A_t\cup B_t),\\[4pt]
22, & \text{if } u\in A_t,\ v\in B_t,\ \text{or } u\in B_t,\ v\in A_t.
\end{cases}
\]
\[
\text{Therefore, }
Q_t=\operatorname{PCG}(T_t,[2,13]).
\text{ and hence, }
G_k=\bigcap_{t=1}^{k+1}Q_t\in (k+1)\textsc{-AND-\textsc{PCG}}.\hfill\triangleleft
\]

\medskip
\noindent\textbf{Step 2: \(G_k\notin k\)-AND-PCG.}

Assume for contradiction that
\(
G_k=\bigcap_{i=1}^k P_i
\)
for some \textsc{PCGs} \(P_1,\dots,P_k\).

Since
\(
\overline{G_k}=F_k,
\)
every edge of \(F_k\) is absent from at least one \(P_i\). For each edge \(e\in E(F_k)\),
choose one index
\(
c(e)\in\{1,\dots,k\}
\)
such that \(e\notin P_{c(e)}\), and color \(e\) with color \(c(e)\).

Fix one component \(K_{2k,2k}\) of \(F_k\). It has \(4k\) vertices and \(4k^2\) edges.
If every color class inside this component were acyclic, then each color class would be a
forest on \(4k\) vertices, and hence would contain at most \(4k-1\) edges. Therefore, the
total number of edges in the component would be at most
\(
k(4k-1)=4k^2-k<4k^2,
\)
a contradiction. Hence, in each component \(K_{2k,2k}\), there exists some color \(i\)
such that the edges of color \(i\) contain a cycle.

Since there are \(k+1\) components and only \(k\) colors, by the pigeonhole principle
there exists a color \(i\) that contains a cycle in at least two distinct components of
\(F_k\).

Now every edge of color \(i\) is absent from \(P_i\), so it belongs to \(\overline{P_i}\).
Also,
\(
\overline{P_i}\subseteq \overline{G_k}=F_k,
\)
because if an edge were absent from \(P_i\) but belonged to
\(
G_k=\bigcap_{j=1}^k P_j,
\)
then in particular it would belong to \(P_i\), impossible.

Therefore, \(\overline{P_i}\) contains a cycle in each of two different connected
components of \(F_k\). In each of those two components, choose a shortest cycle of
\(\overline{P_i}\). Each such cycle is chordless. Since the two cycles lie in different
components of \(F_k\), they are vertex-disjoint and there are no edges between them.

By the necessary condition for \textsc{PCG}s proved by Hossain et al., if the complement
of a graph contains two vertex-disjoint chordless cycles such that no vertices on one cycle is adjacent to the vertices on the other cycle, then that graph is not a
\textsc{PCG} \cite{Hossain2016ANCAI,Rahman2020ASOA}. Applying this to \(P_i\), we obtain
that \(P_i\) is not a \textsc{PCG}, a contradiction. Hence \(G_k\notin k\textsc{-AND-\textsc{PCG}}\).\hfill$\triangleleft$

Combining Steps 1 and 2, we obtain
\(
k\textsc{-AND-\textsc{PCG}} \subsetneq (k+1)\textsc{-AND-\textsc{PCG}}.
\)
\end{proof}

\begin{theorem}\label{thm:and-not-complement}
For every integer $k\ge 1$, the class \(k\)-\textsc{AND}-\textsc{PCG} is not closed
under complement.
\end{theorem}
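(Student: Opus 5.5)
We will reuse the graph $G_k=\overline{F_k}$ from the proof of Theorem~\ref{thm:and-hierarchy}, where $F_k=\bigsqcup_{t=1}^{k+1}K^{(t)}_{2k,2k}$. Step~2 of that proof already shows that $G_k\notin k\textsc{-AND-\textsc{PCG}}$. It therefore suffices to show that its complement $F_k$ does lie in $k\textsc{-AND-\textsc{PCG}}$; then $F_k$ is a member of the class whose complement is not.

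To place $F_k$ in the class, we will show directly that $F_k$ is a \textsc{PCG}. This suffices, since any \textsc{PCG} $P$ satisfies $P=P\cap K_n\cap\cdots\cap K_n$ with $k-1$ copies of $K_n$, and $K_n$ is a \textsc{PCG}. The witness tree is built as follows.
\begin{itemize}
    \item Take a root $r$.
    \item For each $t\in[k+1]$, attach a vertex $c_t$ to $r$ by an edge of weight $L=10$.
    \item Attach two vertices $x_t,y_t$ to $c_t$, each by an edge of weight $1$.
    \item Hang the $2k$ vertices of $A_t$ as leaves on $x_t$ and the $2k$ vertices of $B_t$ as leaves on $y_t$, each by an edge of weight $1$.
\end{itemize}

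The leaf-to-leaf distances then fall into three cases:
\begin{itemize}
    \item two leaves on the same side of the same copy are at distance $2$;
    \item a vertex of $A_t$ and a vertex of $B_t$ are at distance $4$;
    \item leaves in different copies are at distance $2+2+2L=24$.
\end{itemize}
Hence $F_k=\textsc{PCG}(T,[4,4])$, with $T$ the tree just described. The only point needing care is that the inter-copy distance must avoid the chosen interval, which holds for any $L>0$.

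Combining the two facts, $F_k\in k\textsc{-AND-\textsc{PCG}}$ while $\overline{F_k}=G_k\notin k\textsc{-AND-\textsc{PCG}}$, so the class is not closed under complement for every $k\ge 1$. There is no real obstacle: the hard direction, the chordless-cycle and pigeonhole argument, was already carried out in Step~2 of Theorem~\ref{thm:and-hierarchy}, and what remains is the routine tree construction above.
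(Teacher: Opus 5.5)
Your proof is correct and follows the paper's argument. Both reuse Step~2 of Theorem~\ref{thm:and-hierarchy} for $G_k\notin k$-\textsc{AND}-\textsc{PCG}, and both note that $F_k$, being a \textsc{PCG}, lies in $k$-\textsc{AND}-\textsc{PCG}. The only difference is how $F_k\in\textsc{PCG}$ is obtained: the paper cites that $K_{2k,2k}$ is a \textsc{PCG} and that \textsc{PCG}s are closed under disjoint union, whereas you verify it directly with the explicit witness $F_k=\textsc{PCG}(T,[4,4])$ (the distances $2$, $4$, $24$ are correct), which makes that step self-contained.
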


\begin{proof}
Let
\(
F_k:=\bigsqcup_{t=1}^{k+1} K^{(t)}_{2k,2k}
\text{ and }
G_k:=\overline{F_k}.
\)
Each graph \(K_{2k,2k}\) is a \textsc{PCG} \cite{Yanhaona2010DiscoveringPCV}, and finite disjoint unions of
\textsc{PCG}s are again \textsc{PCG}s \cite{XIAO2020105875}. Hence
\(
F_k\in \textsc{PCG}\subseteq k\textsc{-AND-\textsc{PCG}}.
\)
On the other hand, by Theorem~\ref{thm:and-hierarchy},
\(
G_k=\overline{F_k}\notin k\textsc{-AND-\textsc{PCG}}.
\)
Therefore, \(k\)-\textsc{AND}-\textsc{PCG} is not closed under complement.
\end{proof}

\section{Conclusions and Open Problems}\label{sec:conclusion}

We introduced \textsc{$(k,t)$-threshold-\textsc{PCG}}s as a common framework for several
existing generalizations of \textsc{PCGs}. This viewpoint simultaneously recovers
ordinary \textsc{PCGs}, \textsc{$k$-\textsc{OR}-\textsc{PCG}}s, and
\textsc{$k$-\textsc{AND}-\textsc{PCG}}s, while remaining close to the original
tree-distance model.

Our results exhibit a clear dichotomy. When the number of constituent predicates grows
with the graph size, the model becomes universal: every graph on \(n\) vertices is a
\textsc{$(n,t)$-threshold-\textsc{PCG}} for every \(t\in[n]\). In contrast, for
fixed \(k\), even after allowing all thresholds \(t\in[k]\), the resulting classes
have asymptotic density zero. We also showed that threshold combinations can be much
more expressive than multi-interval representations: if \(f(k)\in o(k^2/\log k)\), then
for all sufficiently large \(k\) there exists a graph that is a
\textsc{$(k,t)$-threshold-\textsc{PCG}} but not an
\textsc{$f(k)$-interval-\textsc{PCG}}. In particular, for all sufficiently large \(k\),
\(
k\textsc{-interval-\textsc{PCG}} \subsetneq k\textsc{-OR-\textsc{PCG}}.
\)

Beyond counting arguments, we developed explicit obstruction families via set-system
incidence graphs. These yield concrete lower bounds for threshold representations and, in
particular, explicit infinite families outside low-level threshold classes. The same
approach also gives strong lower bounds for the family \(H_m\) in the multi-interval
setting. Finally, for conjunctions, we proved that
\(
k\textsc{-AND-\textsc{PCG}} \subsetneq (k+1)\textsc{-AND-\textsc{PCG}}
\)
for every \(k\ge 1\), and that \(k\)-\textsc{AND}-\textsc{PCG} is not closed under
complement.

The main remaining questions are quantitative. Our results leave a substantial gap
between lower and upper bounds for explicit obstruction families, and they also leave
open the small-parameter behavior of several threshold and multi-interval classes.

\paragraph*{Open problems.}
\begin{enumerate}
    \item Determine the correct order of growth of the minimum threshold width of
    \(
    H_y=I(4y-3,\,2y-1).
    \)
    Our results give a lower bound of \(\Omega(y/\log y)\), while the general universality theorem gives only much weaker upper bounds.

    \item Determine the correct order of growth of the minimum interval number needed to
    represent \(H_y\). We proved the lower bound
    \(
    \Omega\!\left(\frac{4^y}{y^{7/4}}\right),
    \)
    but the exact asymptotics remain unknown.

    \item Sharpen the incidence-graph bounds. For general \(I(p,q)\), determine the true
    minimum number of constituent \textsc{PCG}s required in a
    \textsc{$(k,t)$-threshold-\textsc{PCG}} representation, and compare it to the true
    minimum interval number in the multi-interval model.

    \item Determine whether the separation
    \(
    k\textsc{-interval-\textsc{PCG}} \subsetneq k\textsc{-OR-\textsc{PCG}}
    \)
    already holds for every fixed \(k\ge 2\), or only asymptotically for sufficiently
    large \(k\).
    \item Understand the relationship between the PCG parameters such as minimum threshold width or minimum internal number and the standard graph parameters, such as treewidth, pathwidth, rankwidth, twinwidth, branchwidth, etc. 
\end{enumerate}

\bibliography{lipics-v2021-sample-article}

\appendix

\section{Non-closure of Multi-Interval PCGs under Complement}
\label{app:kint}

It has been posed as an open problem in \cite{Calamoneri2021OnGOI,survey} to determine whether $k$\textsc{-interval-\textsc{PCG}} is closed under complement. We resolved this question in the negative, utilizing the hierarchy of generalized leaf powers introduced by Dupr\'e la Tour,
Lafond, and Ndiaye~\cite{la2026recognizing}.

\begin{definition}[Leaf power \cite{Calamoneri2011OnRTB}]
A graph is a \emph{leaf power} if it is a \textsc{PCG} representable as
\[
\operatorname{PCG}(T,[0,d_{\max}])
\]
for some edge-weighted tree \(T\) and some \(d_{\max}\ge 0\).
\end{definition}

\begin{definition}[Generalized leaf powers \cite{la2026recognizing}]
Let \(q \ge 1\) be an integer. A graph \(G=(V,E)\) is a \(\mathrm{GLP}(q)\) if there exist
a positively weighted tree \(T\) with leaf set \(V\), and real thresholds
\[
\theta_1 < \theta_2 < \cdots < \theta_q,
\]
such that, for every two distinct vertices \(u,v \in V\),
\[
uv \in E
\quad\Longleftrightarrow\quad
d_T(u,v)\ \text{is at most an odd number of the thresholds } \theta_1,\dots,\theta_q.
\]
\end{definition}

To write our proof succintly, we define a new graph operation. For a graph \(G\), let \(G_1\) and \(G_2\) denote two disjoint copies of \(G\), and we write
\(
G' := \overline{G_1 \cup G_2}.
\)

We now state several well-known facts to facilitate our proof.

\begin{proposition}\label{fact:glp-even}
\cite{la2026recognizing}
For every integer \(i \ge 1\),
\[
\mathrm{GLP}(1)=\textsc{Leaf Power}
\qquad\text{and}\qquad
\mathrm{GLP}(2i)= i\text{-interval-}\textsc{PCG}.
\]
\end{proposition}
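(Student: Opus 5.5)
The plan is to unwind the parity condition in the definition of $\mathrm{GLP}(q)$ into an explicit union of intervals, and then check that the two resulting models coincide. The only technical care needed is about open versus closed endpoints and about zero versus positive edge weights.

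For a tree $T$, thresholds $\theta_1<\cdots<\theta_q$ and a distance $d$, let $N(d):=|\{j:\ d\le\theta_j\}|$. The condition for $uv\in E$ is that $N(d_T(u,v))$ is odd.

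\textbf{Case $q=1$.} Here $N(d)=1$ exactly when $d\le\theta_1$. So a $\mathrm{GLP}(1)$ graph is $\operatorname{PCG}(T,[0,\theta_1])$, which is a leaf power. If $\theta_1<0$ the graph is edgeless, and this is still a leaf power: take $d_{\max}=0$ on a tree with positive weights.

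\textbf{Case $q=2i$.} Since $N$ is nonincreasing in $d$ and drops by one as $d$ passes each threshold:
\[
N(d)\ \text{is odd}\iff d\in\bigcup_{j=1}^{i}(\theta_{2j-1},\theta_{2j}].
\]
Indeed, $N(d)=2i$ for $d\le\theta_1$, $N(d)=2i-1$ on $(\theta_1,\theta_2]$, and so on.

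\textbf{$\mathrm{GLP}(2i)\subseteq i$-interval-PCG.} There are only finitely many leaf-to-leaf distances. So each half-open interval $(\theta_{2j-1},\theta_{2j}]$ can be replaced by the closed interval $[a_j,\theta_{2j}]$, where $a_j$ is the smallest realized distance in it; empty intervals are dropped or replaced by a dummy interval above the diameter. This leaves the set of accepted pairs unchanged and yields $i$ closed intervals on the same tree.

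\textbf{$i$-interval-PCG $\subseteq\mathrm{GLP}(2i)$.}

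1. Start from a representation with nonnegative weights and closed intervals. Merge overlapping or touching intervals, so they are pairwise disjoint and separated by positive gaps.
2. If fewer than $i$ intervals remain, pad with disjoint dummy intervals lying above the diameter of $T$.
3. Let $\delta>0$ be smaller than half of every positive gap among the finitely many relevant values: realized distances and interval endpoints. Enlarge each $[a_j,b_j]$ to $[a_j-\delta,b_j+\delta]$. This accepts no new realized distance and keeps the intervals disjoint.
4. Weights must be strictly positive in $\mathrm{GLP}$. Add $\varepsilon>0$ to every edge weight, with $\varepsilon|E(T)|<\delta$. Every leaf-to-leaf distance increases by less than $\delta$. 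Each accepted distance stays inside its enlarged interval, and each rejected distance stays outside every enlarged interval, by the choice of $\delta$.
5. Set $\theta_{2j-1}:=a_j-\delta-\eta$ and $\theta_{2j}:=b_j+\delta$ for a tiny $\eta>0$. Choose $\eta$ so that no perturbed distance lies in $(a_j-\delta-\eta,\,a_j-\delta)$ and the thresholds remain strictly increasing. Then the parity description above reproduces exactly the accepted pairs.

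The main obstacle is this last direction. One must match closed intervals and nonnegative weights to half-open parity intervals and strictly positive weights. The $\delta$-slack argument, relying on finiteness of the distance set, handles both issues at once.
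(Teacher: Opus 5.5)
Your proof is correct apart from one omitted inclusion, and it takes a genuinely different route from the paper. The paper gives no proof of this statement; it imports the equality from Dupr\'e la Tour, Lafond and Ndiaye \cite{la2026recognizing}. You derive it directly, in two moves:
\begin{itemize}
\item you read the parity rule with $q=2i$ thresholds as acceptance exactly on $\bigcup_{j=1}^{i}(\theta_{2j-1},\theta_{2j}]$;
\item you reconcile the two sets of conventions (closed versus half-open endpoints, zero versus positive weights) using the finiteness of the set of realized distances.
\end{itemize}
The citation buys brevity. Your argument buys self-containment, and it shows that the only real content is this convention matching. That matters here. The witness trees drawn in the paper's preliminaries contain zero-weight edges, while $\mathrm{GLP}(q)$ requires a positively weighted tree. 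So some perturbation like your $\delta$/$\varepsilon$ step is implicitly needed for the cited equality to hold under the paper's own definitions.

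The omitted inclusion: for $q=1$ you only prove $\mathrm{GLP}(1)\subseteq$ \textsc{Leaf Power}. The converse is easy but needs the same care. Given $\operatorname{PCG}(T,[0,d_{\max}])$:
\begin{itemize}
\item if all weights of $T$ are positive, take $\theta_1:=d_{\max}$;
\item otherwise choose $\delta>0$ smaller than the gap between $d_{\max}$ and every realized distance exceeding it, add $\varepsilon$ to each edge with $\varepsilon|E(T)|<\delta$, and take $\theta_1:=d_{\max}+\delta$.
\end{itemize}

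Two minor remarks:
\begin{itemize}
\item The auxiliary $\eta$ in your step~5 is unnecessary. After step~4, every perturbed distance coming from $[a_j,b_j]$ exceeds $a_j$, and every perturbed distance coming from below $a_j$ is less than $a_j-\delta$. So $\theta_{2j-1}:=a_j-\delta$ already works, and the thresholds stay strictly increasing because $2\delta<a_{j+1}-b_j$.
\item If one reads the paper's definition of $k$-interval-\textsc{PCG} as allowing non-closed intervals, the finiteness trick you use in the other direction first normalizes them to closed ones. Your step~1 should then start from that normalized representation.
\end{itemize}
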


\begin{proposition}\label{fact:glp-up}
\cite{la2026recognizing}
For every integer \(q \ge 1\), if \(G \in \mathrm{GLP}(q)\), then
\[
G'=\overline{G_1 \cup G_2}\in \mathrm{GLP}(q+1).
\]
\end{proposition}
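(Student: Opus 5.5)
The statement is Proposition~\ref{fact:glp-up}. First observe what $G'$ looks like. Inside each copy $G_j$, two vertices are adjacent in $G'$ exactly when they are non-adjacent in $G$. Every pair with one vertex in $G_1$ and one in $G_2$ is adjacent in $G'$. So $G'$ is the join of two copies of $\overline{G}$.

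The plan is to build a single tree for $G'$ by connecting two copies of a witness tree for $G$ through a long path, and then to add one very large new threshold. This new threshold flips the parity for every within-copy pair and makes every cross pair adjacent.

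It helps to restate the $\mathrm{GLP}$ rule. For a distance $d$, let $N(d)=|\{s:\theta_s\ge d\}|$. Then $uv\in E$ iff $N(d_T(u,v))$ is odd.

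\textbf{Construction.} Fix a witness $(T,\theta_1<\dots<\theta_q)$ for $G$, and let $D$ be the diameter of $T$, i.e. the maximum leaf-to-leaf distance. Take two disjoint copies $T_1,T_2$ of $T$. In each copy choose a point $x_j$; if $x_j$ is interior to an edge, subdivide that edge, which keeps all weights positive and all leaf distances unchanged. Add a new vertex $r$ and edges $rx_1$, $rx_2$ of weight $L$, with $L$ chosen so that $2L>\theta_q$ and $L>0$. Call the resulting tree $T'$; its leaf set is $V(G_1)\cup V(G_2)$. Add the new threshold $\theta_{q+1}:=2L+2D+1$. Since $2L>\theta_q$, we have $\theta_{q+1}>\theta_q$, so the thresholds are still strictly increasing.

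\textbf{Verification.} There are two kinds of pairs.

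\begin{itemize}
\item For $u,v$ in the same copy, $d_{T'}(u,v)=d_T(u,v)\le D<\theta_{q+1}$. The new threshold is therefore counted, and the new count is $N(d)+1$. Its parity is flipped relative to $G$, so $uv$ is an edge of $G'$ iff $uv\notin E(G)$. This is exactly $\overline{G}$ on each copy.
\item For $u\in G_1$ and $v\in G_2$, $d_{T'}(u,v)=d(u,x_1)+2L+d(x_2,v)$. This distance lies in $[2L,2L+2D]$, hence in $(\theta_q,\theta_{q+1})$. The only threshold that is $\ge d$ is $\theta_{q+1}$, so the count is $1$, which is odd, and the pair is adjacent.
\end{itemize}

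Hence $(T',\theta_1,\dots,\theta_{q+1})$ witnesses $G'\in\mathrm{GLP}(q+1)$.

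\textbf{Main obstacle.} The only delicate point is fixing the precise convention in the definition of $\mathrm{GLP}(q)$: the phrase ``at most'' means $d\le\theta_s$, and one must check that strict versus non-strict inequalities cause no trouble. The choices $2L>\theta_q$ and $\theta_{q+1}>2L+2D$ keep every distance strictly separated from the thresholds that are added or relied upon. Degenerate cases also need a brief check: if $\theta_q$ is negative or small, $L$ is still chosen positive; and if $G$ has a single vertex, $D=0$, $x_j$ is that leaf's attachment point, and the argument goes through unchanged.
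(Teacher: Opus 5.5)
Your proof is correct. The paper does not prove this statement; it imports it as a fact from Dupr\'e la Tour, Lafond and Ndiaye \cite{la2026recognizing}. So your argument is a self-contained justification, not an alternative to a proof in the text.

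Your construction realizes $G'$, the join of two copies of $\overline{G}$, as follows. You join two copies of a witness tree by a long path and add one new largest threshold. That threshold lies above every within-copy distance, so it adds exactly one to each count and flips the parity there. Every cross distance lies strictly between $\theta_q$ and $\theta_{q+1}$, so its count is exactly $1$. The choice $2L>\theta_q$ also keeps the thresholds strictly increasing. This is the natural argument for the upward direction.

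Two small points need tightening.

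\textbf{Choice of $x_j$.} When $T$ has at least two leaves, $x_j$ must be a non-leaf point of $T_j$: an internal vertex or an interior point of an edge. If you attach $r$ to a leaf, that leaf gets degree $2$, and the leaf set of $T'$ is no longer $V(G_1)\cup V(G_2)$. The single-vertex case is the only exception; there, attaching $r$ directly to the lone vertex is fine.

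\textbf{The bound $d_T(u,x_j)\le D$.} You use this for cross pairs, and it needs one line of justification. Since $x_j$ is not a leaf, you can extend the path from $u$ through $x_j$ without backtracking until it reaches a leaf $w\neq u$. Then $d_T(u,x_j)\le d_T(u,w)\le D$. Alternatively, avoid the bound by taking $\theta_{q+1}$ larger than the maximum leaf-to-leaf distance in $T'$. Within-copy pairs still fall below it, and cross pairs still lie above $2L>\theta_q$.
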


\begin{proposition}\label{fact:glp-down}
\cite{la2026recognizing}
For every integer \(q \ge 1\), if \(G \notin \mathrm{GLP}(q)\), then
\[
G'=\overline{G_1 \cup G_2}\notin \mathrm{GLP}(q+1).
\]
\end{proposition}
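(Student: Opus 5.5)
The plan is to argue by contraposition: assume \(G'\in\mathrm{GLP}(q+1)\) and construct a \(\mathrm{GLP}(q)\) representation of \(G\). Fix a positively weighted tree \(T\) with leaf set \(V(G_1)\cup V(G_2)\) and thresholds \(\theta_1<\cdots<\theta_{q+1}\) realizing \(G'\). For a distance \(d\), write \(N(d)=|\{j:d\le\theta_j\}|\). Then \(uv\in E(G')\) exactly when \(N(d_T(u,v))\) is odd. Since \(G'=\overline{G_1\cup G_2}\), two facts follow:
\begin{itemize}
\item every cross pair \(u\in V(G_1)\), \(v\in V(G_2)\) is an edge of \(G'\), so \(N(d_T(u,v))\ge 1\), i.e.\ \(d_T(u,v)\le\theta_{q+1}\);
\item inside each copy \(V(G_i)\), a pair is an edge of \(G\) exactly when \(N\) is even.
\end{itemize}

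The key step is to show that, for at least one index \(i\in\{1,2\}\), every internal distance of \(V(G_i)\) is at most \(\theta_{q+1}\). Suppose not. Then there are \(u_1,w_1\in V(G_1)\) and \(u_2,w_2\in V(G_2)\) with \(d_T(u_1,w_1)>\theta_{q+1}\) and \(d_T(u_2,w_2)>\theta_{q+1}\). The four-point condition for tree metrics gives
\[
d_T(u_1,w_1)+d_T(u_2,w_2)\le\max\{d_T(u_1,u_2)+d_T(w_1,w_2),\ d_T(u_1,w_2)+d_T(w_1,u_2)\}.
\]
The right-hand side is a sum of two cross distances, so it is at most \(2\theta_{q+1}\). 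The left-hand side exceeds \(2\theta_{q+1}\), which is a contradiction. This is the main step; the remaining steps are bookkeeping.

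Take such a copy \(V(G_i)\). Every pair inside it has distance at most \(\theta_{q+1}\), so \(\theta_{q+1}\) is always among the thresholds counted by \(N\). Hence the number of thresholds among \(\theta_1,\dots,\theta_q\) that are at least \(d\) equals \(N(d)-1\). Its parity is therefore opposite to that of \(N(d)\). It follows that a pair inside \(V(G_i)\) is an edge of \(G\) if and only if \(d_T(u,v)\) is at most an odd number of \(\theta_1,\dots,\theta_q\).

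Finally, let \(T_i\) be the minimal subtree of \(T\) spanning \(V(G_i)\), with degree-\(2\) vertices suppressed by adding the weights of the merged edges. This keeps all weights positive and preserves all leaf-to-leaf distances within \(V(G_i)\). Thus \((T_i,\theta_1,\dots,\theta_q)\) witnesses \(G\cong G_i\in\mathrm{GLP}(q)\), which is the contrapositive of the claim. Boundary cases where \(d=\theta_j\) need no special treatment, since the convention ``\(d\le\theta_j\)'' is used consistently throughout.
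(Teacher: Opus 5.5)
The paper gives no proof of this proposition. It is imported from \cite{la2026recognizing} and used as a black box in the non-closure argument of Appendix~\ref{app:kint}, so there is no in-paper argument to compare against; judged on its own, your contrapositive proof is correct and self-contained.

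The chain of steps checks out:
\begin{itemize}
\item Every cross pair is an edge of $G'$, so every cross distance is at most $\theta_{q+1}$.
\item The four-point inequality you apply to $u_1,w_1,u_2,w_2$ shows that the two copies cannot both contain a pair at distance exceeding $\theta_{q+1}$, because the right-hand side involves only cross distances.
\item On a copy $V(G_i)$ where all internal distances are at most $\theta_{q+1}$, that threshold is always counted. Deleting it flips the parity, which turns ``non-edge of $G'$'' into ``edge of $G_i$''.
\item The minimal subtree spanning $V(G_i)$, with degree-$2$ vertices suppressed, has leaf set exactly $V(G_i)$, positive weights, and unchanged internal distances.
\end{itemize}

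Compared with the citation, your route makes the appendix independently verifiable using nothing beyond the four-point condition. It also yields slightly more: the isomorphism $G_1\cong G_2$ is used only in the last line. Hence the same argument shows that if $\overline{H_1\cup H_2}\in\mathrm{GLP}(q+1)$, then $H_1\in\mathrm{GLP}(q)$ or $H_2\in\mathrm{GLP}(q)$.

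The only cases you leave implicit are degenerate ones. For example, if $G$ has a single vertex, the spanning subtree is a single point and membership in $\mathrm{GLP}(q)$ is trivial. These do not affect the argument.
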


\begin{proposition}\label{fact:c4-not-glp1}
\cite{la2026recognizing}
\(
C_4 \notin \mathrm{GLP}(1).
\)
\end{proposition}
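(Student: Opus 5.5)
We read GLP(1) directly from its definition and then derive a contradiction from the four-point condition for tree metrics. With a single threshold $\theta_1$, the definition says that $uv\in E$ if and only if $d_T(u,v)\le\theta_1$. So a $\mathrm{GLP}(1)$ representation of $C_4$ is a positively weighted tree $T$ with leaf set $V(C_4)$ and a real $\theta:=\theta_1$ such that adjacent pairs are at distance at most $\theta$ and non-adjacent pairs are at distance strictly greater than $\theta$.

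Label the cycle $a,b,c,d$ in cyclic order. Then $ab,bc,cd,da$ are edges and $ac,bd$ are non-edges. The key tool is the classical four-point condition: for any four leaves of an edge-weighted tree, among the three pairing sums
\[
d_T(a,b)+d_T(c,d),\qquad d_T(a,d)+d_T(b,c),\qquad d_T(a,c)+d_T(b,d),
\]
the two largest are equal. I would justify this briefly by looking at the subtree spanned by the four leaves. It is a quartet topology $xy\,|\,zw$, possibly with a degenerate zero-length middle path. The two pairings that cross the middle path each count that path twice and are equal. The non-crossing pairing omits it and is therefore no larger.

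Now compare the sums under the representation. The non-edges give $d_T(a,c)+d_T(b,d)>2\theta$, because each term exceeds $\theta$. The edges give $d_T(a,b)+d_T(c,d)\le 2\theta$ and $d_T(a,d)+d_T(b,c)\le 2\theta$. Hence the third sum is strictly larger than the other two, so the maximum of the three sums is attained uniquely. This contradicts the four-point condition, and therefore $C_4\notin\mathrm{GLP}(1)$.

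The only step requiring care is stating the four-point condition correctly, including the degenerate case where the quartet is a star and all three sums coincide. Even then the strict inequality yields a contradiction. As an alternative route, one could cite that leaf powers are strongly chordal, and $C_4$ is not chordal, but the four-point argument is self-contained.
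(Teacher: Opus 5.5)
Your proof is correct. With a single threshold, the $\mathrm{GLP}(1)$ rule reads $uv\in E \iff d_T(u,v)\le\theta_1$. The two non-edges of $C_4$ then force the diagonal pairing sum $d_T(a,c)+d_T(b,d)$ to strictly exceed both side pairing sums, which contradicts the four-point condition. Your treatment of the quartet and star cases is also correct.

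The paper gives no proof of this statement; it simply quotes it from the cited work of Dupr\'e la Tour, Lafond, and Ndiaye. Via $\mathrm{GLP}(1)=$ leaf powers, this is the classical fact you mention as your alternative: leaf powers are (strongly) chordal.

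Compared with that route, your argument is self-contained and uses only the tree-metric structure. It is also robust:
\begin{itemize}
\item it does not need the tree to have exactly four leaves;
\item it does not need the weights to be strictly positive;
\item it does not depend on whether the threshold inequality is strict.
\end{itemize}
So it shows directly that no graph of the form $\mathrm{PCG}(T,[0,d_{\max}])$ contains an induced $C_4$. The chordality route excludes all induced cycles of length at least four at once. However, it rests on a structural theorem about leaf powers rather than a one-line metric inequality.
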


We can now prove our theorem. 

\begin{theorem}
For every integer \(k \ge 1\), the class of \(k\)-interval-\textsc{PCG}s is not closed
under complement.
\end{theorem}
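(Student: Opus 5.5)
We exhibit, for each $k\ge 1$, a graph in $k$-interval-\textsc{PCG} whose complement is not, using the $\mathrm{GLP}$ hierarchy together with the operation $G\mapsto G'=\overline{G_1\cup G_2}$. By Proposition~\ref{fact:glp-even}, $k$-interval-\textsc{PCG} $=\mathrm{GLP}(2k)$. The plan is to take a graph that lies in $\mathrm{GLP}(2k)$ but not in $\mathrm{GLP}(2k-1)$, and then show that one of the two graphs $\overline{X_1\cup X_2}$ or its complement $X_1\cup X_2$ gives the counterexample.

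\textbf{Step 1: build a separating graph.} Start from $C_4$. Clearly $C_4$ is a \textsc{PCG}, hence a $1$-interval-\textsc{PCG}, so $C_4\in\mathrm{GLP}(2)$. By Proposition~\ref{fact:c4-not-glp1}, $C_4\notin\mathrm{GLP}(1)$. Define $X^{(1)}:=C_4$ and $X^{(j+1)}:=(X^{(j)})'$. Iterating Propositions~\ref{fact:glp-up} and~\ref{fact:glp-down}, we get by induction that $X^{(j)}\in\mathrm{GLP}(j+1)$ and $X^{(j)}\notin\mathrm{GLP}(j)$. Now set $X:=X^{(2k-1)}$. Then
\[
X\in\mathrm{GLP}(2k)=k\text{-interval-}\textsc{PCG}\quad\text{and}\quad X\notin\mathrm{GLP}(2k-1).
\]

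\textbf{Step 2: move up one level.} Let $Y:=X'=\overline{X_1\cup X_2}$. By the same two propositions, $Y\in\mathrm{GLP}(2k+1)$ and $Y\notin\mathrm{GLP}(2k)$. Hence $Y$ is not a $k$-interval-\textsc{PCG}.

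\textbf{Step 3: identify the complement.} The complement of $Y$ is $\overline{Y}=X_1\cup X_2$, a disjoint union of two copies of $X$. It remains to show $X_1\cup X_2\in k$-interval-\textsc{PCG}. Then $\overline{Y}$ is in the class while $Y$ is not, and the class is not closed under complement.

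\textbf{Main obstacle: closure under disjoint union.} I expect the difficulty to lie in showing that disjoint unions of $k$-interval-\textsc{PCG}s remain $k$-interval-\textsc{PCG}s. I would try to adapt the standard disjoint-union construction for \textsc{PCG}s from \cite{XIAO2020105875}. Take two witness trees for $X$ with the same intervals, which is possible since both copies are the same graph. Join their roots through a long path of length $L$, where $L$ exceeds the largest interval endpoint. Then every cross distance lies beyond all intervals, so no edges appear between the copies, and within each copy all distances are unchanged.

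The point needing care is that the intervals must not be unbounded on the right. If an interval has right endpoint $+\infty$, i.e.\ the representation accepts all sufficiently large distances, the cross distances would be accepted. To handle this, I would first show that any representation can be normalised to use only bounded intervals, by truncating each interval at the diameter of the tree; this is legitimate because the tree is finite, so all realised distances are at most its diameter. Only then would I attach the long path, making the cross distances exceed every interval endpoint.
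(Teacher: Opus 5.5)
Your proof is correct and uses the same ingredients as the paper: the tower $F_1=C_4$, $F_{r+1}=\overline{F_r^{(1)}\cup F_r^{(2)}}$, Propositions~\ref{fact:glp-even}, \ref{fact:glp-down} and~\ref{fact:c4-not-glp1}, and closure of $k$-interval-\textsc{PCG}s under disjoint union via a long connecting path. The only difference is that you argue directly, using Proposition~\ref{fact:glp-up} to put $F_{2k-1}$ in $\mathrm{GLP}(2k)$ and so exhibiting $F_{2k-1}^{(1)}\cup F_{2k-1}^{(2)}$ as an explicit member whose complement lies outside the class, whereas the paper argues by contradiction and climbs the tower using the assumed complement-closure; your truncation of unbounded intervals at the tree diameter also fills a detail the paper leaves implicit.
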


\begin{proof}
Fix \(k \ge 1\). For \(r \ge 1\), define graphs \(F_r\) recursively by
\[
F_1 := C_4,
\qquad
F_{r+1} := \overline{F_r^{(1)} \cup F_r^{(2)}},
\]
where \(F_r^{(1)}\) and \(F_r^{(2)}\) are two disjoint copies of \(F_r\). By Fact~\ref{fact:c4-not-glp1}, we have \(F_1 \notin \mathrm{GLP}(1)\). Hence,
by repeated application of Fact~\ref{fact:glp-down},
\[
F_r \notin \mathrm{GLP}(r)
\qquad\text{for every } r \ge 1.
\]
In particular,
\[
F_{2k} \notin \mathrm{GLP}(2k).
\]
Using Fact~\ref{fact:glp-even}, this becomes
\[
F_{2k} \notin k\text{-interval-}\textsc{PCG}.
\]

Now suppose, for contradiction, that the class of \(k\)-interval-\textsc{PCG}s is
closed under complement.

Since \(C_4\) is a \textsc{PCG}~\cite{Calamoneri2012AllGWN}, it is also a
$k$\textsc{-interval-\textsc{PCG}}. Thus \(F_1\) belongs to the class.
We claim that then \(F_r\) is a $k$\textsc{-interval-\textsc{PCG}} for every \(r \ge 1\).

Indeed, assume that \(F_r\) is a $k$\textsc{-interval-\textsc{PCG}}.
Then \(F_r^{(1)}\) and \(F_r^{(2)}\) are also \(k\)-interval-\textsc{PCG}s.
Moreover, their disjoint union is again a $k$\textsc{-interval-\textsc{PCG}}: take two
disjoint copies of a witness tree for \(F_r\) and connect them by an edge of
sufficiently large weight, so that every leaf-to-leaf distance between the two copies
lies outside the union of the \(k\) admissible intervals, while all distances inside
each copy remain unchanged. Therefore,
\[
F_r^{(1)} \cup F_r^{(2)}
\]
is a $k$\textsc{-interval-\textsc{PCG}}. By the assumed closure under complement,
\[
F_{r+1}=\overline{F_r^{(1)} \cup F_r^{(2)}}
\]
is also a $k$\textsc{-interval-\textsc{PCG}}. This proves the claim by induction.

Applying the claim with \(r=2k\), we obtain that \(F_{2k}\) is a
$k$\textsc{-interval-\textsc{PCG}}, resulting in a contradiction.

Therefore, the class of \(k\)-interval-\textsc{PCG}s is not closed under complement.
\end{proof}
\section{Shell Bound Lemma
}
\label{app:ssb}

In this section, we introduce some useful notions that are useful for proving several results on the incidence graph classes.

\begin{lemma}[Single-shell bound]\label{lem:single-shell-bound_app}
There exists an absolute constant \(C>0\) such that the following holds.

Let \(T\) be a reduced weighted tree with leaf set \(A\), where \(|A|=m\), and let
\(I=[\alpha,\beta]\) be an interval of real numbers. Define
\[
\mathcal F(T,I):=
\Bigl\{
\{a\in A:d_T(x,a)+\lambda\in I\}: x\in T,\ \lambda\in\mathbb R
\Bigr\},
\]
where \(x\in T\) means that \(x\) is an arbitrary point of the geometric realization of
the tree \(T\). Then
\(
|\mathcal F(T,I)|\le C\,m^3.
\)
\end{lemma}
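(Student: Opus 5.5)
Our plan is to parametrize the shells by the point \(x\) and then count how many distinct sets can arise as \(x\) varies. First we dispose of \(\lambda\). For a fixed point \(x\), the set \(\{a: d_T(x,a)+\lambda\in I\}\) equals \(\{a: d_T(x,a)\in[\alpha-\lambda,\beta-\lambda]\}\). This is a set of leaves whose distances from \(x\) lie in a window of fixed length \(\beta-\alpha\). Sort the \(m\) values \(d_T(x,a)\). As \(\lambda\) sweeps \(\mathbb R\), the window sweeps monotonically, so the selected set is always a contiguous block in this sorted order. Moreover, the set changes only when an endpoint crosses one of the \(m\) values. Hence a fixed \(x\) contributes at most \(2m+1=O(m)\) distinct sets.

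Next we make \(x\) discrete. The reduced tree \(T\) has \(O(m)\) edges. We place \(x\) on an edge \(e\) and parametrize it by its distance \(s\) from one endpoint. Each leaf \(a\) then has \(d_T(x,a)=c_a\pm s\): the sign is \(+\) for leaves on the far side of the near endpoint and \(-\) for the others. Since \(\lambda\) is free, we may translate all values by \(s\). With this normalization, the leaves on one side of \(e\) have fixed values \(c_a\), and those on the other side have values \(c_a-2s\). So the family for edge \(e\) consists of the sets \(\{a: c_a+\epsilon_a\cdot 2s\in J\}\), where \(\epsilon_a\in\{0,-1\}\) is determined by the side and \(J\) ranges over all translates of a fixed-length window.

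The combinatorial type of such a set is determined by two things. One is the relative order of the \(m\) values. The other is which pairs of values differ by more than \(\beta-\alpha\); this suffices because the contiguous block for a window is determined by which pairwise gaps are at most \(\beta-\alpha\) together with the order. As \(s\) varies, a comparison \(c_a-2s\) versus \(c_b\), or the difference of such a pair against \(\pm(\beta-\alpha)\), changes at only \(O(m^2)\) critical values of \(s\). Comparisons between values on the same side never change. This splits the edge into \(O(m^2)\) intervals. On each interval, together with the breakpoints themselves, the sorted order and the threshold pattern are constant, so by the first step at most \(O(m)\) sets arise. In total we get \(O(m)\) edges times \(O(m^2)\) pieces times \(O(m)\) windows, which is \(O(m^4)\).

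\textbf{Main obstacle.} The step I expect to be hardest is shaving this to \(O(m^3)\). The remedy I would try is to avoid multiplying per-edge and per-window counts, and instead count each set by its extremal defining events. A set arising for some \((x,\lambda)\) can be perturbed until the window boundary touches a leaf value at each end, or until \(x\) reaches a critical position. Each resulting canonical configuration is specified by an edge plus two leaves: one leaf gives the lower end of the window, and the other, together with the fixed window length, pins down \(s\). That gives \(O(m)\cdot O(m^2)\) configurations, each yielding \(O(1)\) sets via a local perturbation, hence \(|\mathcal F(T,I)|\le C m^3\). The delicate part is verifying that this canonical perturbation is always available, including in degenerate cases with equal distances or empty sets. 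If it fails, the argument still gives \(O(m^4)\), which would only alter the exponent \(3\) in the later corollaries.
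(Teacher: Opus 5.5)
Your first two steps are correct, but together they only give $|\mathcal F(T,I)|=O(m^4)$: $O(m)$ edges, times $O(m^2)$ slabs of $s$ per edge, times $O(m)$ windows per slab. The lemma asks for $Cm^3$. The step meant to remove the extra factor $m$ is only a sketch, and you leave its key claims unverified: that every realized set can be charged to a canonical configuration fixed by an edge and two leaves, with only $O(1)$ sets per configuration. Because the window is closed, pushing the parameters until a window end touches a leaf value can change the set, since some sets are realized only on open cells. So the argument must be organized as a charging of cells to nearby vertices, and that is exactly what is missing. The fallback is also not harmless. With exponent $4$, the ratio in Corollary~\ref{cor:explicit-2or} tends to $7/4<2$, so $I(p,7)$ would no longer be excluded from the $(2,t)$ classes.

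The factor $m$ is lost because you decouple $s$ from $\lambda$. Slicing the edge at the $O(m^2)$ critical values and then sweeping the window separately in each slab is a vertical decomposition of a planar arrangement, and it recounts the same set in many slabs. The missing idea is to let $(s,\lambda)$ vary jointly, which is what the paper does. On a fixed edge, membership of a leaf $a$ can change only on the two lines $\lambda=\alpha-d_T(x,a)$ and $\lambda=\beta-d_T(x,a)$ in the strip $[0,\ell_e]\times\mathbb R$. These at most $2m$ lines have slope $\pm1$ and cut the strip into $O(m^2)$ faces. The set is constant on each face, and summing over the $O(m)$ edges gives $O(m^3)$.

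Your own normalization already finishes the job without any arrangement. Take the window $J=[u,u+\beta-\alpha]$. Whether a fixed-side leaf lies in $J$ depends only on $u$. A moving-side leaf satisfies $c_a-2s\in J$ if and only if $c_a\in[u+2s,\,u+2s+\beta-\alpha]$, which depends only on $u+2s$. Hence every set realized on $e$ is the union of a window-set of the fixed side and a window-set of the moving side. By your first step each of these ranges over $O(m)$ possibilities, giving $O(m^2)$ per edge and $O(m^3)$ overall.
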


\begin{proof}
Fix an interval \(I=[\alpha,\beta]\). For a point \(x\) on the tree and a real number \(\lambda\), define
\[
X(x,\lambda):=\{a\in A: d_T(x,a)+\lambda\in I\}.
\]
We want to bound how many different subsets \(X(x,\lambda)\) can appear. We first keep \(x\) on one fixed edge \(e\) of the tree, and count how many subsets can be realized there.

Let \(e\) be an edge of length \(\ell_e\), and parametrize its points by
\(x\in[0,\ell_e]\). For each leaf \(a\in A\), the restriction of \(d_T(x,a)\) to \(e\)
is affine with slope in \(\{+1,-1\}\): as \(x\) moves along \(e\), the distance to \(a\)
either increases at unit rate or decreases at unit rate, depending on which side of \(e\)
the leaf \(a\) lies. Hence
\[
d_T(x,a)=\sigma_a x+c_a
\]
for suitable constants \(\sigma_a\in\{+1,-1\}\) and \(c_a\in\mathbb R\).

Now fix a leaf \(a\). The membership of \(a\) in \(X(x,\lambda)\) is determined by whether
\(
\alpha \le d_T(x,a)+\lambda \le \beta.
\)
As \((x,\lambda)\) varies, this truth value can change only when
\(d_T(x,a)+\lambda\) hits one of the two endpoints \(\alpha\) or \(\beta\). Thus, the only
possible change points are on the two lines
\[
\lambda=\alpha-(\sigma_a x+c_a)
\qquad\text{and}\qquad
\lambda=\beta-(\sigma_a x+c_a).
\]

So inside the strip
\(
0\le x\le \ell_e,
\)
the behavior of all leaves is controlled by an arrangement of at most \(2m\) such lines,
together with the two boundary lines \(x=0\) and \(x=\ell_e\).

This arrangement cuts the strip into \(O(m^2)\) regions. Inside any one region, no boundary
line is crossed, so for every leaf \(a\), the statement
\(
d_T(x,a)+\lambda\in I
\)
has the same truth value throughout the whole region. 

Therefore, the subset \(X(x,\lambda)\)
is constant on each region. Hence, while \(x\) stays on the fixed edge \(e\), at most \(O(m^2)\) different subsets can
be realized.

Finally, since \(T\) is reduced and has \(m\) leaves, it has at most \(2m-3\) edges.
Summing the \(O(m^2)\) possibilities over all edges gives
\(
|\mathcal F(T,I)|=O(m^3)
\), and the claim follows.
\end{proof}

\begin{proposition}\label{prop:interval-set-system_app}
There exists an absolute constant \(C>0\) such that the following holds.

Let \(A\) be a finite set with \(|A|=m\), and let \(\mathcal S\subseteq 2^A\). If
\(G(A,\mathcal S)\) is an \textsc{$\ell$-interval-\textsc{PCG}}, then
\(
|\mathcal S|\le C\,m^3\ell^2.
\)
\end{proposition}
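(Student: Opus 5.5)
We follow the proof of Theorem~\ref{thm:threshold-set-system}, replacing the single interval by a union of $\ell$ intervals on one common tree. Fix an $\ell$-interval-\textsc{PCG} representation $(\widetilde T,I_1\cup\cdots\cup I_\ell,\zeta)$ of $G(A,\mathcal S)$ with pairwise disjoint intervals $I_j=[\alpha_j,\beta_j]$. As before, let $T$ be the minimal subtree of $\widetilde T$ spanning the leaves of $A$, with degree-$2$ vertices suppressed; it is reduced, has leaf set $A$, and has at most $2m-3$ edges. For each $S\in\mathcal S$, let $x_S$ be the closest point of $T$ to $b_S$ and $\lambda_S=d_{\widetilde T}(b_S,x_S)$. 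Then
\[
S=N(b_S)=\Bigl\{a\in A: d_T(x_S,a)+\lambda_S\in \textstyle\bigcup_{j=1}^{\ell} I_j\Bigr\}.
\]
So $|\mathcal S|$ is at most the number of distinct sets $X(x,\lambda):=\{a: d_T(x,a)+\lambda\in\bigcup_j I_j\}$ as $(x,\lambda)$ ranges over $T\times\mathbb R$.

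Next we rerun the edge-by-edge arrangement argument of Lemma~\ref{lem:single-shell-bound_app}. On a fixed edge $e$ of $T$, parametrized by $x\in[0,\ell_e]$, each distance has the form $d_T(x,a)=\sigma_a x+c_a$ with $\sigma_a=\pm1$. Membership of $a$ in $X(x,\lambda)$ can change only on the $2\ell$ lines $\lambda=\gamma-\sigma_a x-c_a$, where $\gamma$ ranges over the interval endpoints. This gives at most $2\ell m$ lines in the strip, whose arrangement has $O((\ell m)^2)=O(\ell^2m^2)$ cells. The set $X(x,\lambda)$ is constant on each cell, and also on the lower-dimensional faces, which contribute at most the same order. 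Summing over at most $2m-3$ edges gives $|\mathcal S|=O(\ell^2 m^3)$, i.e.\ $|\mathcal S|\le C m^3\ell^2$.

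The main point to check is the cell count, since $X$ may take different values on lines and vertices of the arrangement than in the open cells. We bound the total number of faces (vertices, edges and cells) of an arrangement of $N$ lines in a strip by $O(N^2)$, using $N=2\ell m+2$. A possible refinement is to observe that all lines have slope $\pm1$, but it is unnecessary: the crude bound already gives the stated $\ell^2$ dependence. One further detail is the degenerate case where $b_S$ attaches at a vertex of $T$; this is covered because $x$ ranges over closed edges.
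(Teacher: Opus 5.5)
Your proposal is correct and follows essentially the same route as the paper: restrict to the reduced spanning tree on $A$, encode each $b_S$ by an attachment point and an offset, count the faces of the arrangement of $O(\ell m)$ endpoint lines on each edge, and sum over the $O(m)$ edges. Your explicit reduction from $\widetilde T$ and your handling of the lower-dimensional faces only spell out details that the paper's short sketch leaves implicit.
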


\begin{proof}
The proof is the same shell-counting argument as in
Lemma~\ref{lem:single-shell-bound_app}, but with a union of \(\ell\) disjoint intervals
instead of a single interval. On a fixed edge of the reduced tree, each leaf contributes
\(2\ell\) endpoint lines in the \((x,\lambda)\)-plane, so the arrangement has
\(O(m\ell)\) lines and therefore \(O(m^2\ell^2)\) faces. Summing over the \(O(m)\)
edges of the reduced tree yields \(O(m^3\ell^2)\) possible neighborhoods on the
\(B\)-side.
\end{proof}

\end{document}